\definecolor{maroon}{RGB}{250, 0, 150}
\definecolor{orange}{RGB}{255, 80, 0}
\definecolor{ogreen}{RGB}{80, 150, 80}
\newtheorem{theorem}{Theorem}
\newtheorem{lemma}[theorem]{Lemma}
\newtheorem{corollary}[theorem]{Corollary}
\theoremstyle{definition}
\newtheorem{definition}[theorem]{Definition}
\newtheorem{example}[theorem]{Example}
\theoremstyle{remark}
\numberwithin{equation}{section}
\newcommand{\R}{\mathbb{R}}
\newcommand{\C}{\mathcal{C}}
\newcommand{\tr}{\textcolor{red}}
\newcommand{\be}{\begin{enumerate}}
\newcommand{\ee}{\end{enumerate}}
\newcommand{\tbn}{Thurston-Bennequin number }
\newcommand{\rn}{rotation number }
\begin{document}

\title{Legendrian $\theta-$graphs}

\author{Danielle O'Donnol$^ \dagger$}
\address{Department of Mathematics, Imperial College London, London SW7 2AZ,  UK}
\email{d.odonnol@imperial.ac.uk}

\author{Elena Pavelescu}
\address{Department of Mathematics, Oklahoma State University, Stillwater,  OK 74078, USA}
\email{elena.pavelescu@okstate.edu}

\subjclass[2010]{Primary 57M25, 57M50;  Secondary 05C10}

\thanks{$\dagger$ supported in part by an AMS-Simons Travel Grant}

\date{\today}

\keywords{Legendrian graph, Thurston-Bennequin number, rotation number, $\theta-$graph}

\begin{abstract} 
In this article we give necessary and sufficient conditions for two triples of integers to be realized as the \tbn and the \rn of a Legendrian $\theta-$graph with all cycles unknotted.
We show that these invariants are not enough to determine the Legendrian class of a topologically planar $\theta-$graph. 
We define the transverse push-off of a Legendrian graph and we determine its self linking number for  Legendrian $\theta-$graphs.
In the case of topologically planar $\theta-$graphs, we prove that the topological type of the transverse push-off is that of a pretzel link.
\end{abstract}

\maketitle

\section{Introduction}\label{intro}

In this paper, we continue the systematic study of Legendrian graphs in $(\R^3, \xi_{std})$ initiated in \cite{ODPa} .
Legendrian graphs have appeared naturally in several important contexts in the study of contact manifolds.  
They are used in Giroux's proof of existence of open book decompositions compatible with a given contact structure \cite{G}. 
Legendrian graphs also appeared in  Eliashberg and Fraser's proof of the Legendrian simplicity of the unknot \cite{EF}. 

In this article we focus on Legendrian $\theta-$graphs.
We predominantly work with topologically planar embeddings and embeddings where all the cycles are unknots.
In the first part we investigate questions about realizability of the classical invariants and whether the Legendrain type can be determined by these invariants.
In the second part we introduce the transverse push-off a Legendrian graph and investigate its properties in the case of $\theta-$graphs.

In \cite{ODPa}, the authors extended the classical invariants Thurston-Bennequin number, $tb$, and rotation number, $rot$, from Legendrian knots to Legendrian graphs.  
Here we prove that all possible pairs of $(tb, rot)$ for a $\theta-$graph with unknotted cycles are realized.
It is easily shown that  all pairs of integers $(tb, rot)$ of different parities and such that $tb+|rot|\le -1$ can be realized  as the Thurston-Bennequin number and the rotation number of a Legendrian unknot.
We call a pair of integers \textit{acceptable} if they satisfy the two restrictions above.
For $\theta-$graphs, we show the following: 

\begin{theorem} 
Any two triples of integers $(tb_1, tb_2, tb_3)$ and $(rot_1, rot_2, rot_3)$  for which  $(tb_i, rot_i)$ are acceptable and  $R=rot_1-  rot_2 +rot_3 \in\{0, -1\}$ can be realized as the Thurston-Bennequin number and the rotation number of a Legendrian $\theta-$graph with all cycles unknotted. 
\end{theorem}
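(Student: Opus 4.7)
My plan is to construct, for each acceptable triple satisfying $R \in \{0,-1\}$, an explicit Legendrian $\theta$-graph in the front projection. The strategy is a ``seed-plus-stabilization'' construction: first build a small family of base $\theta$-graphs with explicitly computed invariants, then modify individual edges by local Legendrian moves to reach any prescribed target.

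First I would set up the combinatorics. Label the three edges $e_1, e_2, e_3$ joining the two trivalent vertices, orient them consistently, and label the cycles $C_k = e_i \cup e_j$ (for $\{i,j,k\}=\{1,2,3\}$) so that $(tb_k, rot_k)$ is the invariant of $C_k$. A Legendrian stabilization inserted in the interior of $e_i$ lies in exactly the two cycles $C_j, C_k$ that contain $e_i$, decreasing both of their $tb$ values by $1$ and shifting both rotation numbers by $+1$ (positive stabilization) or $-1$ (negative stabilization), with a sign depending on how the cycle traverses $e_i$. Write $a_i$ and $b_i$ for the numbers of positive and negative stabilizations placed on $e_i$; then the changes to the cycle invariants are linear in the $a_i, b_i$.

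Next I would exhibit the seed graphs. Take three ``nearly vertical'' Legendrian arcs from one vertex to the other in the front projection, placed in stacked positions with a minimal number of crossings and cusps, so that each pair of arcs closes up to a Legendrian unknot. A small case analysis (depending on which orientation conventions at the vertices produce $R=0$ versus $R=-1$) yields two seed $\theta$-graphs whose cycle invariants I can compute directly; these serve as the starting points for the two cases of the theorem.

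The heart of the argument is then algebraic: given target triples satisfying the hypotheses and seed invariants $(tb_k^0, rot_k^0)$, I want to solve
\begin{align*}
tb_k^0 - tb_k &= (a_i+b_i)+(a_j+b_j), \\
rot_k - rot_k^0 &= \varepsilon_{k,i}(a_i - b_i) + \varepsilon_{k,j}(a_j - b_j),
\end{align*}
for non-negative integers $a_i, b_i$, with signs $\varepsilon_{k,\cdot} \in \{\pm 1\}$ fixed by the construction. Choosing the seeds so that each $tb_k^0 - tb_k \geq 0$ is sufficiently large, the $tb$-system reduces to a standard ``triangle-inequality'' system in the sums $a_i+b_i$, while the $rot$-system determines the signed counts $a_i - b_i$; the parity constraints in ``acceptable'' together with $R \in \{0,-1\}$ ensure the two systems admit a compatible integer solution with $a_i, b_i \geq 0$. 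Finally, since Legendrian stabilization is a local move that preserves the underlying topological type, and each cycle of each seed is a topological unknot, every cycle of the resulting graph is still an unknot.

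The main obstacle I anticipate is the bookkeeping of signs and parities: the sign $\varepsilon_{k,i}$ depends on the orientation each cycle induces on $e_i$, and these signs interact with the homological relation among the cycles that forces $R \in \{0,-1\}$. Choosing the seed graphs carefully so that the sign pattern matches the target value of $R$, and then verifying that the resulting linear system is unimodular enough to admit a non-negative integer solution whenever the acceptability hypothesis holds, is where the work concentrates; the topological claim that the cycles remain unknots is essentially automatic once the seed is planar.
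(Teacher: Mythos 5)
There is a genuine gap at the heart of your linear-algebra step. If the only modification you allow on a fixed seed is stabilization along the edges, then each stabilization on $e_i$ lowers the $tb$ of \emph{both} cycles through $e_i$ by $1$, so with $s_i=a_i+b_i\ge 0$ the achievable drops from the seed are exactly $(D_1,D_2,D_3)=(s_2+s_3,\,s_1+s_3,\,s_1+s_2)$ (in your labelling $C_k=e_i\cup e_j$). These always satisfy the triangle inequalities $D_k\le D_i+D_j$, and since your seeds are a \emph{fixed} small family (minimal crossings and cusps, hence bounded $tb^0$), you cannot make the $D_k$ ``sufficiently large'' in the asymmetric way a general target requires. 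Concretely, $(tb_1,tb_2,tb_3)=(-1001,-1,-1)$ with $(rot_1,rot_2,rot_3)=(0,0,0)$ is acceptable with $R=0$, but it forces $D_1=tb_1^0+1001$ while $D_2+D_3=tb_2^0+tb_3^0+2$ stays bounded, so $s_1=\tfrac{1}{2}(D_2+D_3-D_1)<0$ and your system has no non-negative solution. Stabilizations alone cannot decouple the three Thurston--Bennequin numbers; this is not a bookkeeping issue about signs and parities but a structural obstruction to the reachable set of your construction.

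The missing ingredient, which is exactly what the paper's construction supplies, is a local modification that changes the $tb$ of \emph{one} cycle only: inserting a block of crossings between a pair of parallel strands $e_i,e_j$ changes the writhe (hence $tb$) of the single cycle containing both edges, leaves the other two cycles' diagrams unaffected, and introduces no cusps, so all rotation numbers are unchanged. The paper's proof uses single-strand boxes of stabilizations (with $r_i=|rot_i|$ zig-zags) to hit the rotation numbers and two-strand crossing boxes (with $-tb_k-\cdots-1$ half twists) to tune each $tb_k$ independently, then runs a case analysis on which of $r_i\ge r_j+r_k$ or $r_i+1=r_j+r_k$ holds to guarantee all box entries are non-negative, and verifies $tb$ and $rot$ by writhe and cusp counts; the constraint $R\in\{0,-1\}$ enters through the cusp pattern at the right-hand vertex. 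Your proposal could be repaired by adding such two-strand twist regions as a second family of parameters, but as written the ``triangle-inequality system'' step fails for infinitely many acceptable targets, and the subsequent compatibility claim between the $tb$- and $rot$-systems (requiring $s_i\ge|a_i-b_i|$ with matching parity) is also left unverified, which is precisely where the paper's case analysis does its work.
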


It is known that certain Legendrian knots and links are determined by the invariants $tb$ and $rot$: the unknot  \cite{EF},  torus knots and the figure eight knot \cite{EH}, and links consisting of an unknot and a cable of that unknot  \cite{DG}.
To ask the same question in the context of Legendrian graphs, we restrict to topologically planar Legendrian $\theta-$graphs.
A \textit{topologically planar graph} is  one which is ambient isotopic to a planar embedding.
The answer is no, the \tbn and the \rn do not determine the Legendrian type of a topologically planar $\theta-$graph. 
The pair of graphs in Figure~\ref{fig-not-determined} provides a counterexample.

The second part of this article is concerned with Legendrian ribbons of Legendrian $\theta-$graphs and their boundary.
Roughly, a ribbon of a Legendrian graph $g$ is a compact oriented surface $R_g$ containing $g$  in its interior, such that the contact structure is tangent to $R_g$ along $g$, transverse to $R_g\smallsetminus g$, and $\partial R_g$ is a transverse knot or link.
We define the \textit{transverse push-off of} $g$ to be the boundary of $R_g $.
This introduces two new invariants of Legendrian graphs, the transverse push-off and its self linking number.
In the case of a Legendrian knot, this definition gives a two component link consisting of both the positive and the negative transverse push-offs.
However, with graphs the transverse push-off can have various numbers of components, depending on connectivity and Legendrian type.

We show the push-off  of a Legendrian $\theta-$graph is either  a transverse knot $K$ with $sl=1$ or a three component transverse link whose three components are the  positive transverse push-offs of the three Legendrian cycles given the correct orientation.
For topologically planar graphs, the topological type of $\partial R_g$ is determined solely by the Thurston-Bennequin number of $g$, as per the following:

\begin{theorem} Let $G$ represent a topologically planar Legendrian $\theta-$graph with $tb = (tb_1, tb_2, tb_3)$.
Then the boundary of its attached ribbon is  an $(a_1, a_2, a_3)-$pretzel, where 
$a_1= tb_1+tb_2-tb_3$,
$a_2= tb_1+tb_3-tb_2$, 
$a_3= tb_2+tb_3-tb_1$.
\label{thm-pretzel}
\end{theorem}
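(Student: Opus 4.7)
The plan is to put $G$ in a standard planar form in which each edge carries a well-defined integer number of half-twists of the ribbon, and then to read off both the $tb_k$'s and the pretzel structure of $\partial R_G$ from this form. Since $G$ is topologically planar, I would begin by choosing an ambient isotopy of $\R^3$ moving the underlying $1$-complex into the plane $\{z = 0\}$, and carrying the ribbon $R_G$ along as an embedded compact surface (this does not change the isotopy type of $\partial R_G$). After the isotopy, $R_G$ differs from the flat planar neighborhood of the planar $\theta$-graph only in three disjoint local twist regions, one along each edge $e_i$, with $a_i$ half-twists relative to the flat in-plane framing; at each vertex the three incident edges share the flat framing coming from the planar disk, so the integers $a_i$ are unambiguously defined. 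Direct visual inspection of this form shows that $\partial R_G$ is exactly the $(a_1, a_2, a_3)$-pretzel link.

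It then remains to identify the $a_i$ in terms of $tb_1, tb_2, tb_3$. Fix a cycle $C_k = e_i \cup e_j$. Restricted to an annular neighborhood of $C_k$ inside $R_G$, the ribbon is the contact-framing annulus of the Legendrian knot $C_k$, whose boundary components link with linking number $tb_k$. In the standard form the same annulus is the planar neighborhood of $C_k$ with $a_i + a_j$ half-twists inserted (the vertex disks are flat and so contribute nothing), giving linking number $(a_i + a_j)/2$. Thus $a_i + a_j = 2\,tb_k$ for each $k$; inverting the resulting $3\times 3$ linear system yields the claimed formulas $a_1 = tb_1 + tb_2 - tb_3$, $a_2 = tb_1 + tb_3 - tb_2$, $a_3 = tb_2 + tb_3 - tb_1$.

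The main technical point to justify is the additivity of twist along a cycle into single-edge contributions with no residual vertex correction. I would handle this by fixing the flat planar framing on each vertex disk as a common reference and defining $a_i$ to be the total twist the ribbon accumulates along $e_i$ relative to that reference; additivity of twist under concatenation of framed arcs then gives $tb_k = (a_i + a_j)/2$ directly. As a consistency check, a symmetric planar model (each cycle a standard Legendrian unknot of $tb = -1$) yields $a_1 = a_2 = a_3 = -1$ and the $(-1,-1,-1)$-pretzel boundary, and a single negative stabilization along $e_i$ decreases $a_i$ by $2$ while decreasing exactly the two $tb_k$'s containing $e_i$ by $1$, so both sides of the formula transform consistently; this reduces the general case to the symmetric model via Legendrian moves.
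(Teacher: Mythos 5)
Your identification of the pretzel coefficients is correct and takes a genuinely different route from the paper. The paper gets the $a_i$ by a careful bookkeeping argument: it records how each Reidemeister move used to planarize the associated flat vertex graph changes the twisting of the three bands (RI adds a full twist, RV adds a half twist to all three bands, etc.), arrives at $a_1=-[\textrm{cusps on }e_1]-1+2\,\textrm{cr}[e_1]+\textrm{cr}[e_1,e_2]+\textrm{cr}[e_1,e_3]-\textrm{cr}[e_2,e_3]$, checks invariance of this count under RIV, and then matches it against a writhe-and-cusp expansion of $tb_1+tb_2-tb_3$. Your argument replaces all of this with the observation that the annular neighborhood of the cycle $\C_k=e_i\cup e_j$ in the ribbon induces the contact framing, so its two boundary circles link $tb_k$ times, while in the flat planar form the same linking number is $(a_i+a_j)/2$; solving $a_i+a_j=2tb_k$ gives the formulas. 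This is cleaner and more conceptual, it explains at once why the three coefficients have the same parity, and it sidesteps the RIV-invariance claim entirely. (Your closing ``consistency check'' via stabilizations should not be presented as reducing the general case to the symmetric model -- topologically planar Legendrian $\theta$-graphs are not all stabilizations of the standard one, as the paper's Example~\ref{example-not-determined} suggests -- but your main argument does not need that reduction.)

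There is, however, a genuine gap in your first step, and it sits exactly where the paper does its real work. After an ambient isotopy carrying the underlying $1$-complex into the plane, the ribbon is some band surface containing the planar graph, but it is not automatic that it ``differs from the flat planar neighborhood only in three local twist regions with flat vertex disks'': the cyclic order in which the three bands attach to a vertex disk of the ribbon may disagree with the planar cyclic order of the edges at that vertex, so the vertex disks cannot simply be flattened in place. For trivalent vertices this mismatch is always a reflection and can be repaired by flipping the vertex disk, at the cost only of extra half-twists in the bands (which is harmless for your argument); this is precisely the content of the paper's Lemma~\ref{same-moves} identifying flat vertex and pliable vertex equivalence for graphs of degree at most $3$, which is what lets the paper conclude that the associated flat vertex graph is planar. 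The point cannot be waved through: for $n\theta$-graphs with $n>3$ the analogous assertion is false and the push-off need not be a pretzel (the paper's last section gives counterexamples), so your proof must explicitly invoke trivalence at this step. With that argument supplied, your proposal is complete.
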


\noindent This elegant relation is specific to $\theta-$graphs and does not generalize to $n\theta-$graphs for $n>3$. 
We give examples to sustain this claim in the last part of the article.
This phenomenon is due to the relationship between flat vertex graphs and pliable vertex graph in the special case of  all vertices of degree at most three. 

\subsection*{Acknowledgements}
The authors would like to thank Tim Cochran and John Etnyre for their continued support, and Chris Wendel and Patrick Massot for helpful conversations.


\section{Background}
We give a short overview of contact structure,  Legendrian and transverse knots and their invariants. We recall how the invariants of Legendrian knots can be extended to Legendrian graphs.
Let $M$ be an oriented 3-manifold and $\xi$ a 2-plane field on $M$.
If  $\xi =\ker \alpha$ for some $1-$form $\alpha$ on $M$ satisfying $\alpha\wedge d\alpha > 0,$ then $\xi$ is a \textit{contact structure} on $M$.
On $\mathbb{R}^3$, the $1-$form $\alpha = \, dz - y\, dx$ defines a contact structure  called the standard contact structure, $\xi_{std}$.  
Throughout this article we work in $(\mathbb{R}^3, \xi_{std})$.

A knot $K \subset (M, \xi)$ is called \textit{Legendrian} if for all $p\in K$ and $\xi_p$ the contact plane at $p$,  $T_pK\subset \xi_p$.
A spatial graph $G$ is called \textit{Legendrian} if all its edges are Legendrian curves that are non-tangent to each other at the vertices.
If all edges around a vertex are oriented outward, then no two tangent vectors at the vertex coincide in the contact plane.
However, two tangent vectors may have the same direction but different orientations resulting in a smooth arc through the vertex.
It is a result of this structure that the order of the edges around a vertex in a contact plane is not changed up to cyclic permutation under Legendrian isotopy.  
We study Legendrian knots  and graphs via their front projection, the projection on the $xz-$plane. 
Two generic front projections of a Legendrian graph are related by Reidemeister moves I, II and III, together with three moves given by the mutual position of vertices and edges \cite{BI}. See Figure~\ref{moves}.


\begin{figure}[htpb!]
\begin{center}
\begin{picture}(400, 188)
\put(0,0){\includegraphics[width=5.5in]{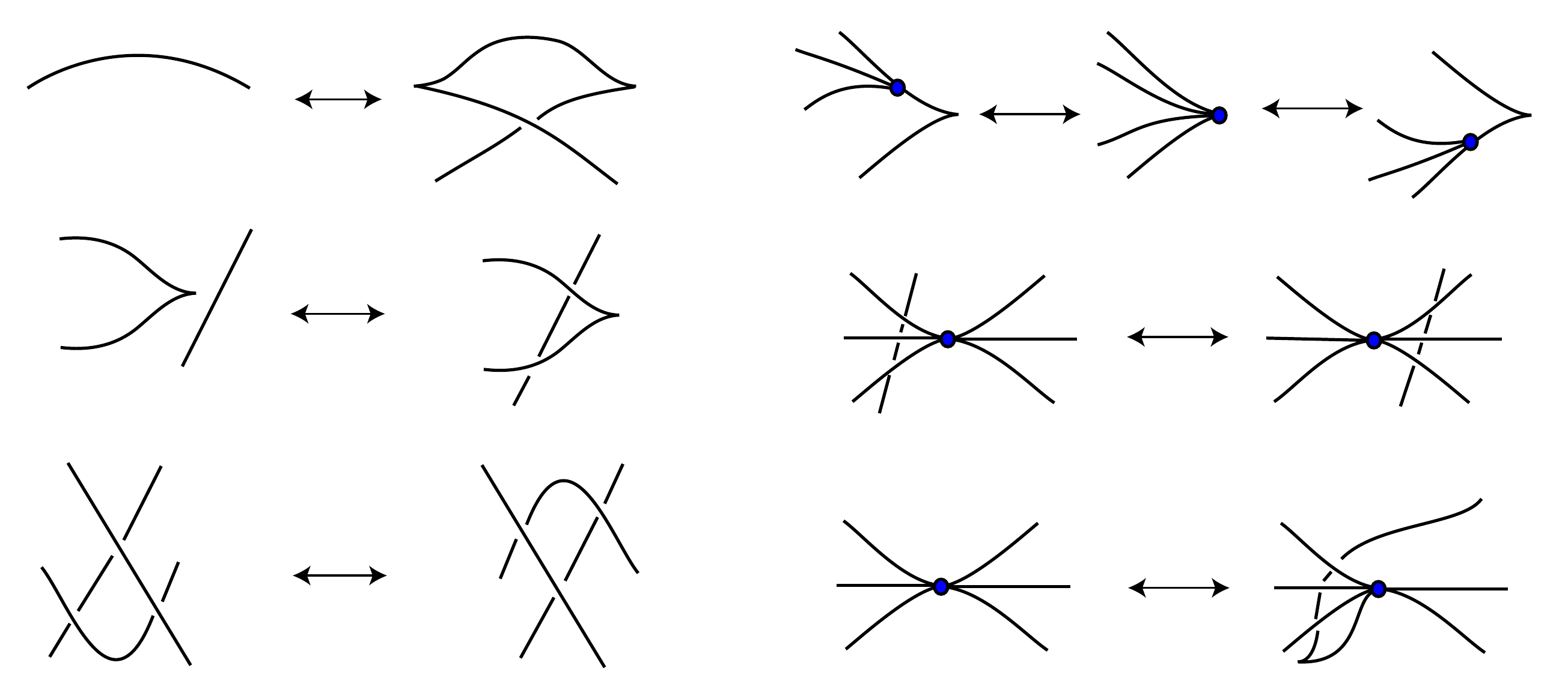}}
\put(84, 153){I}
\put(82,99){II}
\put(80,33){III}
\put(256, 150){IV}
\put(328, 151){IV}
\put(295,93){V}
\put(291,30){VI}
\end{picture}
\caption{\small Legendrian isotopy moves for graphs:  Reidemeister moves I, II and III, a vertex passing through a cusp (IV), an edge passing under or over a vertex (V), an edge adjacent to a vertex rotates to the other side of the vertex (VI). Reflections of these moves that are Legendrian front projections are also allowed.}\label{moves}
\end{center}
\end{figure}

Apart from the topological knot class, there are two classical invariants of Legendrian knots, the Thurston-Bennequin number, $tb$, and the rotation number, $rot$. 
The Thurston-Bennequin number is independent of the orientation on $K$ and measures the twisting of the contact framing on $K$ with respect to the Seifert framing.
To compute the Thurston-Bennequin number of a Legendrian knot $K$, consider a non-zero vector field $v$ transverse to $\xi$, take $K'$ the push-off of $K$ in the direction of $v$, and define $tb(K):= lk(K,K').$ 
For a Legendrian knot $K$,  $tb(K)$ can be computed in terms of the writhe and the number of cusps in its front projection $\tilde{K}$ as $$tb(K) = w(\tilde{K})-\frac{1}{2}\textrm{cusps}(\tilde{K}).$$

To define the rotation number, $rot(K)$, assume $K$ is oriented and $K=\partial \Sigma$, where $\Sigma\subset \mathbb{R}^3$ is an embedded oriented surface. 
The contact planes when restricted to $\Sigma$ form a trivial $2-$dimensional bundle and the trivialization of $\xi | _{\Sigma}$ induces a trivialization on  $\xi | _L = L\times \mathbb{R}^2$.  
Let $v$ be a non-zero vector field tangent to $K$ pointing in the direction of the orientation on $K$.  
The winding number of $v$ about the origin with respect to this trivialization is the rotation number of $K$, denoted $rot(K)$ . 
Taking the positively oriented trivialization $\{d_1=\frac{\partial}{\partial y}, d_2=-y\, \frac{\partial }{\partial z}-\frac{\partial}{\partial x}  \}$ for $\xi_{std}$, one can check that for $\tilde{K}$ the front projection for $K$, 
$$  rot(K) = \frac{1}{2}(\downarrow\textrm{cusps}(\tilde{K})-\uparrow\textrm{cusps}(\tilde{K})).$$

Given a Legendrian knot $K$, Legendrian knots in the same topological class as $K$ can be obtained by stabilizations.
A \textit{stabilization} means replacing a strand of $K$ in the front projection of $K$ by one of the zig-zags  in Figure~\ref{stabilizations}. 
The stabilization is said to be positive if down cusps are introduced and negative if up cusps are introduced. 
The Legendrian isotopy type of $K$ changes through stabilization and so do the Thurston-Bennequin number and rotation number : $tb(S_{\pm}(K)) = tb(K) -1$ and $rot(S_{\pm}(K)) = rot(K) \pm 1$.


\begin{figure}[htpb!]
\begin{center}
\begin{picture}(300, 119)
\put(0,0){\includegraphics[width=4.3in]{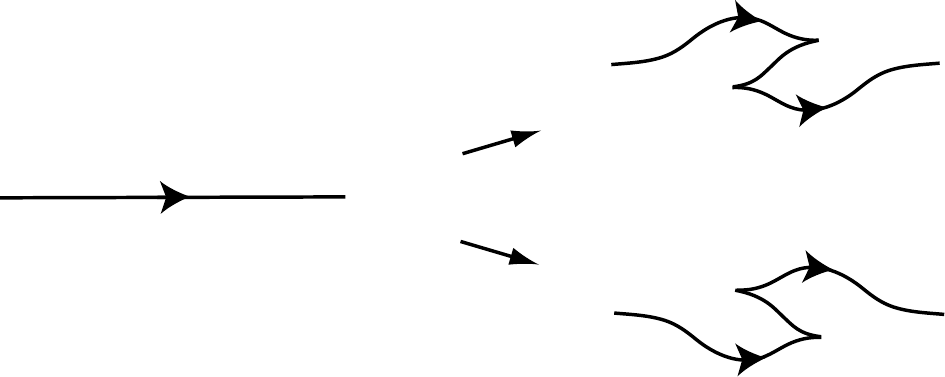}}
\put(53,69){$K$}
\put(135,85){\small $S_+(K)$}
\put(135,28){\small $S_-(K)$}
\end{picture}
\caption{Positive and negative stabilizations in the front
projection.}\label{stabilizations}
\end{center}
\end{figure}

Both the Thurston-Bennequin number and the rotation number can be extended to piece-wise smooth Legendrian knots and to Legendrian graphs \cite{ODPa}.
For a Legendrian graph $G$,  fix an order on the cycles of $G$ and define $tb(G)$ as the ordered list of the Thurston-Bennequin numbers of the cycles of $G$.
Once we fix an order on the cycles of $G$ with orientation, we define  $rot(G)$ to be the ordered list of the rotation numbers of the cycles of $G$.  
If $G$ has no cycles, define both $tb(G)$ and $rot(G)$ to be the empty list.

An oriented knot $t \subset (\mathbb{R}^3, \xi_{std})$ is called \textit{transverse} if for all $p\in t$ and $\xi_p$ the contact plane at $p$,  $T_pt$ is positively transverse to $\xi_p$.
If  $t$ is transverse, we let $\Sigma$ be an oriented surface with $t=\partial \Sigma$.
 As above, $\xi|_\Sigma$ is trivial, so there is a  non-zero vector field $v$ over $\Sigma$ in $\xi$.
 If $t'$ is obtained by pushing $t$ slightly in the direction of $v$, then the \textit{self linking number} of $t$ is $sl(t)= lk(t, t')$.
It is easily seen that  if $\tilde{t}$ is the front projection of $t$, then $sl(t)=writhe(\tilde{t})$.

For an embedded surface $\Sigma \subset (\mathbb{R}^3, \xi_{std})$, the intersection $l_x=T_x\Sigma \cap \xi_x$ is a line for most $x\in \Sigma$, except where the contact plane and the plane tangent to $\Sigma$ coincide.
We denote by $\l:=\cup l_x \subset T\Sigma$ this singular line field, where the union includes lines of intersection only.
Then, there is a singular foliation $\mathcal{F}$, called \textit{the characteristic foliation on $\Sigma$}, whose leaves are tangent to $l$.


\section{realization theorem}
In this section we find which triples of integers can be realized as $tb$ and $rot$ of  Legendrian $\theta-$graphs with all cycles unknotted.
Both the structure of the $\theta-$graph and the required unknotted cycles impose restrictions on these integers.
We also investigate whether $tb$ and $rot$ uniquely determine the Legendrian type.
The following lemma identifies restrictions on the invariants of Legendrian unknots.

\begin{lemma} All pairs of integers $(tb, rot)$ of different parities and such that $$tb+|rot|\le -1$$ can be realized  as the Thurston-Bennequin number and the rotation number of a Legendrian unknot.
\end{lemma}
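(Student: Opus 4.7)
The plan is to realize every admissible pair by starting from the standard Legendrian unknot and iterating the two stabilization operations described in Section 2. The standard front projection of the Legendrian unknot (a simple ``eye'' with one up cusp and one down cusp) has $tb=-1$ and $rot=0$, and Eliashberg--Fraser's theorem tells us that every Legendrian unknot is obtained from this one by a sequence of positive and negative stabilizations. So the entire realization problem reduces to tracking $(tb, rot)$ under stabilization.

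First I would recall the stabilization formulas already stated in the excerpt: $tb(S_\pm(K)) = tb(K)-1$ and $rot(S_\pm(K)) = rot(K)\pm 1$. Applying $n_+$ positive and $n_-$ negative stabilizations (in any order) to the standard unknot therefore produces a Legendrian unknot $U_{n_+,n_-}$ with
\[
tb(U_{n_+,n_-}) = -1 - n_+ - n_-, \qquad rot(U_{n_+,n_-}) = n_+ - n_-.
\]
Given a target pair $(tb, rot)$, I would solve this linear system for $n_+$ and $n_-$, obtaining
\[
n_+ = \tfrac{1}{2}(-tb + rot - 1), \qquad n_- = \tfrac{1}{2}(-tb - rot - 1).
\]

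The final step is to verify that the two hypotheses on $(tb, rot)$ are exactly what is needed for $n_+$ and $n_-$ to be nonnegative integers. Integrality of $n_\pm$ is equivalent to $tb + rot$ being odd, i.e.\ to $tb$ and $rot$ having different parities. Nonnegativity of both $n_+$ and $n_-$ is equivalent to $-tb - 1 \ge |rot|$, i.e.\ to $tb + |rot| \le -1$. Thus the stated conditions are precisely the realizable ones, and $U_{n_+,n_-}$ furnishes the required Legendrian unknot.

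There is no serious obstacle here; the argument is essentially bookkeeping on stabilizations. The only point that requires a small comment is the explicit existence of the base unknot with $(tb,rot)=(-1,0)$, which is standard and visible from the front projection via the formulas $tb = w - \tfrac{1}{2}\mathrm{cusps}$ and $rot = \tfrac{1}{2}(\downarrow\!\mathrm{cusps} - \uparrow\!\mathrm{cusps})$ recalled in Section 2.
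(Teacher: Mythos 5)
Your argument is correct, and it reaches the conclusion by a somewhat different route than the paper. The paper's proof goes through the Eliashberg--Fraser standard forms: it recalls that every Legendrian unknot is isotopic to a unique unknot in standard form, expresses $tb$ and $rot$ in terms of the parameters $t$ (crossings) and $s$ (extra cusps) of those fronts, and then solves explicitly for $(t,s)$ in each of the cases $rot>0$, $rot<0$, $rot=0$. You instead take the maximal unknot with $(tb,rot)=(-1,0)$ and apply $n_\pm$ stabilizations, solving $n_+=\tfrac{1}{2}(-tb+rot-1)$, $n_-=\tfrac{1}{2}(-tb-rot-1)$, and observing that integrality and nonnegativity of $n_\pm$ are exactly the parity condition and the inequality $tb+|rot|\le -1$. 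This is, if anything, more elementary: for the realization statement you do not actually need the Eliashberg--Fraser classification at all (stabilization preserves the topological unknot type, so the stabilized fronts are unknots), so your citation of that theorem is superfluous rather than load-bearing; the classification would only be needed to show that no other pairs occur, which the paper's approach gets as a by-product of putting an arbitrary Legendrian unknot in standard form. What the paper's route buys is the explicit standard-form fronts with prescribed invariants, which it then reuses as building blocks (the crossing and stabilization boxes) in the realization construction for $\theta$-graphs in Theorem \ref{theorem-realize}; your stabilized unknots would serve that purpose equally well.
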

\begin{proof}
We know from  \cite{El} that for a Legendrian unknot $K$ in $(\mathbb{R}^3, \xi_{std})$, $tb(K)+|rot(K)|\le -1$. 
Eliashberg and Fraser \cite{EF} showed that a Legendrian unknot $K$ is Legendrian isotopic to a unique unknot in standard form.
The standard forms are shown in  Figure~\ref{fig-standard_unknotEF}.
The front projection in Figure~\ref{fig-standard_unknotEF}(a) represents two distinct Legendrian classes, depending on the chosen orientation. 
For the front projection shown in Figure~\ref{fig-standard_unknotEF}(b) both orientations give the same Legendrian class. 
The number of cusps and the number of crossings of the unknot in standard form are uniquely determined by $tb(K)$ and $rot(K)$ as follows:
\be
\item If $rot(K)\ne 0$ (Figure~\ref{fig-standard_unknotEF}(a)), then 
$$ tb(K) = -(2t+1+s)  $$
$$ rot(K) =  \left\{
\begin{array}{rl}
s, & \mbox{ if  the leftmost cusp is a down cusp} \\
-s, & \mbox{ if the leftmost cusp is an up cusp.}  \end{array}
\right.  $$
\item If $rot(K) =0$  (Figure~\ref{fig-standard_unknotEF}(b)), then 
$$ tb(K) = -(2t+1).  $$
\ee
Notice that in both cases the $tb$ and $rot$ have different parities.  

\begin{figure}[htpb!]
\begin{center}
\begin{picture}(330, 70)
\put(0,0){\includegraphics{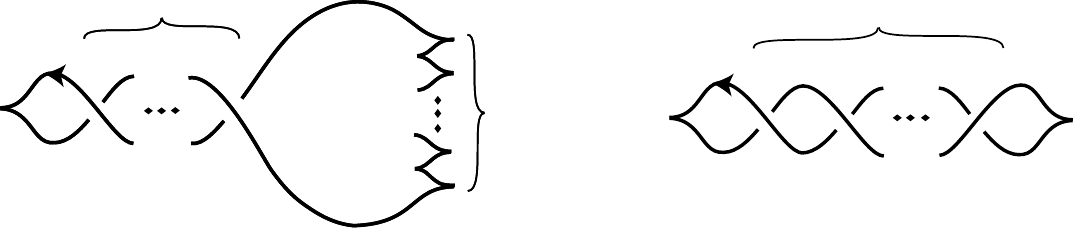}}
\put(55,-6){(a)}
\put(249, -6){(b)}
\put(35,65){\small $2t+1$}
\put(145, 31){\small $s$}
\put(250,63){\small $2t$}
\end{picture}
\caption{\small Legendrian unknot in standard form: (a) $rot (K) > 0$ [reverse orientation gives $rot(K)< 0$], (b) $rot(K) =0$.}
\label{fig-standard_unknotEF}
\end{center}
\end{figure}

For a pair $(tb, rot)$, the integers  $s$ and $t$ are determined as follows: 
\begin{itemize}
\item If $rot>0$, the pair $(tb, rot)$ is realized via the Legendrian unknot with front projection as in Figure~\ref{fig-standard_unknotEF}(a), for $(t,s)=(-\frac{tb+rot+1}{2}, rot)$.
\item If $rot<0$, the pair $(tb, rot)$ is realized via the Legendrian unknot with front projection as in Figure~\ref{fig-standard_unknotEF}(a), for $(t,s)=(-\frac{tb-rot+1}{2}, -rot)$.
\item If $rot=0$, the pair $(tb, rot)$ is realized via the Legendrian unknot with front projection as in Figure~\ref{fig-standard_unknotEF}(b), for $t=-\frac{tb+1}{2}$.
\end{itemize}

\end{proof}

\noindent We have described the pairs $(tb, rot)$ that can occur for the unknot.  

Towards the proof of  Theorem \ref{theorem-realize}, we show in the next lemma that Legendrian $\theta-$graphs can be standardized near their two vertices.

\begin{lemma}\label{NearVer}
Any Legendrian $\theta-$graph $G$, can be Legendrian isotoped to a graph $\tilde{G}$ whose front projection looks as in Figure ~\ref{fig-two-vertices} in the neighborhood of its two vertices. 
\end{lemma}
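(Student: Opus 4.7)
The proof plan is to reduce to a canonical local picture at each vertex by repeated application of the Legendrian isotopy moves of Figure~\ref{moves}, most importantly move VI which lets an edge incident to a vertex rotate to the other side of the vertex. Let $v$ be one of the two vertices of $G$. At $v$, the three incident edges have pairwise distinct tangent directions in the contact plane $\xi_v$, so they determine a well-defined cyclic order in $\xi_v$, and this order is preserved under Legendrian isotopy.

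The first step is to use move VI repeatedly to rearrange the three edges so that, in the front projection, they emanate from $v$ on the prescribed side (say, all to the right at one vertex and all to the left at the other), matching the gross layout in Figure~\ref{fig-two-vertices}. Because move VI moves only one edge past the vertex at a time without altering the cyclic order in $\xi_v$, and because there are only three edges, finitely many applications suffice. In the course of this, moves IV and V allow us to push any cusps or transverse strands out of a small neighborhood of $v$, so that the neighborhood eventually contains only the vertex itself together with the three edge-germs. A further small ambient Legendrian isotopy, supported in that neighborhood and preserving the prescribed cyclic order, then brings the three edge-germs into precisely the positions, slopes, and cusp-placements of Figure~\ref{fig-two-vertices}.

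Since these modifications take place inside a small neighborhood of $v$, they do not affect the other vertex, and so we apply the same procedure there independently. The main (minor) obstacle is precisely to verify that one can carry out all the needed local modifications inside fixed disjoint neighborhoods of the two vertices without propagating changes to the rest of $G$; this is handled by choosing the supporting neighborhoods small enough and by using only the local moves of Figure~\ref{moves}, so that no global conflict arises and the full graph $\tilde{G}$ is Legendrian isotopic to $G$.
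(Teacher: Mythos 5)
Your proposal is correct and follows essentially the same route as the paper: apply Reidemeister move VI finitely many times to place all three strands to the right of one vertex and to the left of the other, then take small enough neighborhoods of the vertices so they match Figure~\ref{fig-two-vertices}. The extra discussion of cyclic-order preservation and locality of the moves is harmless but not needed beyond the paper's short argument.
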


\begin{proof}
Label the vertices of $G$ by $a$ and $b$. 
In the front projection of $G$, use the Reidemeister VI move if necessary, to move the three strands on the right of vertex $a$ while near $a$ and on the left of vertex $b$ while near $b$. 
Then, small enough neighborhoods of the two vertices look as in Figure ~\ref{fig-two-vertices}.
\end{proof}

\begin{figure}[htpb!]
\begin{center}
\begin{picture}(200, 70)
\put(0,0){\includegraphics{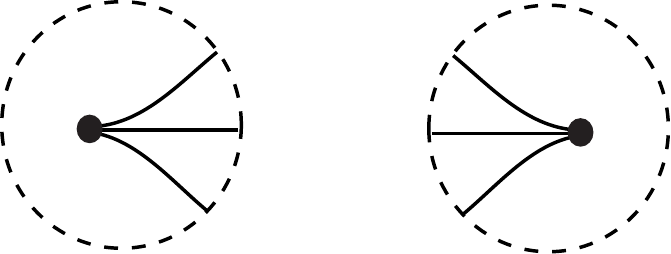}}
\put(15, 33){\small $a$}
\put(173, 32){\small $b$}
\end{picture}
\caption{Legendrian $\theta-$graph near its two vertices.} 
\label{fig-two-vertices}
\end{center}
\end{figure}

For the remainder of this section, we assume that near its two vertices, $a$ and $b$, the front projection of the graph looks as in Figure~\ref{fig-two-vertices}.
We  fix notation: $e_1$ is the top strand at $a$ in the front projection,  $e_2$ is the middle strand at $a$, $e_3$ is the lower strand at $a$,  
 $\C_1$ is  the oriented cycle exiting vertex $a$ along $e_1$ and entering  vertex $a$ along $e_2$,
$\C_2$ is the oriented cycle exiting vertex $a$ along $e_1$ and entering  vertex $a$ along $e_3$,
$\C_3$ is the oriented cycle exiting vertex $a$ along $e_2$ and entering vertex $a$ along $e_3$. 
We note that there is no consistent way of orienting the three edges which gives three oriented cycles.  
It should also be noted that the above notation is a labelling given after the graph is embedded.  
If a labelled graph is embedded relabelling of the graph and reorienting of the cycles may be necessary in order to have the following lemma apply.  

In the next lemma we show what additional restrictions occur as a result of the structure of the $\theta-$graph.
\begin{lemma}
Let $rot_1,  rot_2$ and $rot_3$ be integers representing rotation numbers for cycles $\C_1$, $\C_2$ and $\C_3$, in the above notation.
Then $rot_1-  rot_2 +rot_3 \in \{0, -1\}$.
 \end{lemma}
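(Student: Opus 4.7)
The plan is to apply Lemma~\ref{NearVer} to put $G$ in standard form near both vertices and then to evaluate $R := rot_1 - rot_2 + rot_3$ by decomposing each $rot(\mathcal{C}_k)$ into an edge-interior contribution $E_k$ plus vertex contributions $A_k$ at $a$ and $B_k$ at $b$.

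First I would observe that $\mathcal{C}_1 - \mathcal{C}_2 + \mathcal{C}_3 = (e_1 - e_2) - (e_1 - e_3) + (e_2 - e_3) = 0$ as an oriented $1$-chain on $G$, so each edge is traversed with net multiplicity zero. The smooth contributions cancel, giving $R = (A_1 - A_2 + A_3) + (B_1 - B_2 + B_3)$.

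Next I would compute the two vertex contributions in terms of angles of the outgoing tangents in the contact plane. At $a$, let $\alpha_1, \alpha_2, \alpha_3$ be the angles in $\xi_a$ of the outgoing tangents of $e_1, e_2, e_3$. In the standard form all three tangents lie in a common half-plane of $\xi_a$, and the labelling convention (top-to-bottom $= e_1, e_2, e_3$) forces $\alpha_1 > \alpha_2 > \alpha_3$. Since the pairwise differences $\alpha_i - \alpha_j$ then all lie in $(0, \pi)$, the turn formula $2\pi A_k = \alpha_{\text{out}} - \alpha_{\text{in}} - \pi$ applies without any $\pm 2\pi$ correction, and a short algebraic computation gives $A_1 - A_2 + A_3 = -\tfrac{1}{2}$.

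At $b$, let $\theta_1, \theta_2, \theta_3$ denote the analogous angles in $\xi_b$. These also lie in a common half-plane, opposite to the one at $a$, but their relative order is not fixed by the labelling at $a$; all six orderings of the $\theta_i$ can occur as the global topology of the three edges varies. I would then run a case analysis over the six orderings. In each case the turn formula requires a $\pm 2\pi$ correction for those pairs with $\theta_i < \theta_j$, and one finds $B_1 - B_2 + B_3 = \pm \tfrac{1}{2}$. The two a priori possible extreme values $\pm \tfrac{3}{2}$ (corresponding to the three individual corrections all lining up) are excluded by transitivity of the order on the $\theta_i$, since they would require a self-contradictory cyclic inequality of the form $\theta_i > \theta_j > \theta_k > \theta_i$.

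Combining the two contributions gives $R = -\tfrac{1}{2} \pm \tfrac{1}{2} \in \{0, -1\}$, as claimed. The main obstacle is the case analysis at $b$: the contribution at $a$ is pinned down by the labelling convention and is almost immediate, but at $b$ the six orderings of the $\theta_i$ must each be handled, with transitivity being the essential ingredient that rules out the extreme outcomes and confines $B_1 - B_2 + B_3$ to $\pm \tfrac{1}{2}$.
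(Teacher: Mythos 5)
Your argument is correct and follows essentially the same route as the paper's proof: after standardizing near the vertices via Lemma~\ref{NearVer}, the paper likewise cancels the edge (stabilization) contributions in the alternating sum, pins down the vertex-$a$ contribution by the standard form, and runs a case analysis at $b$ phrased as whether each cycle has an up or down cusp there, with your transitivity step appearing as the exclusion of the two impossible cusp patterns (Cases 3 and 6 of its table). Your tangent-angle bookkeeping at the corners is just an intrinsic reformulation of the paper's front-projection cusp count, so the two proofs coincide in substance.
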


\begin{proof}
For $i=1, 2, 3$, let $k_i$ ($k_i'$) represent the number of positive (negative) stabilizations along the edge $e_i$ when oriented from vertex $a$ to vertex $b$.
Let $s_i:=k_i-k_i'$, for $i=1, 2, 3$.
Then,

$$ rot_1 =  \left\{
\begin{array}{rl}
s_1-s_2, & \mbox{ if $\C_1$ has a down cusp at $b$} \\
s_1-s_2-1, & \mbox{  if $\C_1$ has an up cusp at $b$}  \end{array}
\right.  $$

$$ rot_2 =  \left\{
\begin{array}{rl}
s_1-s_3, & \mbox{  if $\C_2$ has a down cusp at $b$} \\
s_1-s_3-1, & \mbox{ if $\C_2$ has an up cusp at $b$}  \end{array}
\right.  $$

$$ rot_3 =  \left\{
\begin{array}{rl}
s_2-s_3, & \mbox{ if $\C_3$ has a down cusp at $b$} \\
s_2-s_3-1, & \mbox{  if $\C_3$ has an up cusp at $b$}  \end{array}
\right.  $$

This gives eight different possible combinations and the possible values of  $R=rot_1-  rot_2 +rot_3$ are given in Table 1.

\begin{table}[htdp]
\begin{center}
\begin{tabular}{|c|c|c|c|r|}
\hline
\multirow{2}{*}{Case} & \multicolumn{3}{|c|}{Cusp at $b$} & \multirow{2}{*}{$R$}\\ \cline{2-4}
  & $\C_1$ & $\C_2$ & $\C_3$ & \\
  \hline
 1 & $\downarrow$ & $\downarrow$ & $\downarrow$ & 0 \\
  \hline
   2 & $\downarrow$ & $\downarrow$ & $\uparrow$ & $-1$ \\
 \hline
 3 & $\downarrow$ & $\uparrow$ & $\downarrow$ & 1 \\
 \hline
 4 & $\downarrow$ & $\uparrow$ & $\uparrow$ & 0 \\
 \hline

 5 & $\uparrow$ & $\downarrow$ & $\downarrow$ & $-1$ \\
 \hline
  6 & $\uparrow$ & $\downarrow$ & $\uparrow$ & $-2$ \\
 \hline

 7 & $\uparrow$ & $\uparrow$ & $\downarrow$ & 0 \\
 \hline
 8 & $\uparrow$ & $\uparrow$ & $\uparrow$ & $-1$ \\
\hline
\end{tabular}
\end{center}
\label{table-sums}
\caption{Possible values for $R=rot_1-rot_2+rot_3.$}
\end{table}

Case 3 cannot occur.  If both $\C_1$ and $\C_3$ have a down cusp at $b$, edge $e_3$ sits below edge $e_1$ at $b$, hence $\C_2$ has a down cusp at $b$.
Also, Case 6 cannot occur.  If both $\C_1$ and $\C_3$ have an up cusp at $b$, edge $e_3$ sits above edge $e_1$ at $b$, hence $\C_2$ has an up cusp at $b$.
Among the six remaining cases, three of them give $rot_1-  rot_2 +rot_3 =-1$ (when there is an odd number of up cusps at $b$ between $\C_1$, $\C_2$ and $\C_3$).
The other three cases give $rot_1-  rot_2 +rot_3 = 0$ (when there is an even number of up cusps at $b$ between $\C_1$, $\C_2$ and $\C_3$). 
\end{proof}

\begin{theorem}
Any two triples of integers $(tb_1, tb_2, tb_3)$ and $(rot_1, rot_2, rot_3)$  for which  $tb_i+|rot_i|\le -1$, $tb_i$ and $rot_i$ are of different parities for $i=1,2,3$ and  $R=rot_1-  rot_2 +rot_3 \in\{0, -1\}$ can be realized as the Thurston-Bennequin number and the rotation number of a Legendrian $\theta-$graph with all cycles unknotted.
\label{theorem-realize}
\end{theorem}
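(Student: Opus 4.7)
The plan is to prove the theorem constructively: build a small family of ``base'' Legendrian $\theta$-graphs with all cycles unknotted and invariants that can be read off by hand, then show that stabilizations on individual edges realize any admissible target.

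\textbf{Step 1 (base).} Using Lemma \ref{NearVer} I would place the graph in standardized form near its two vertices and take the planar $\theta$-graph whose three edges are horizontal between $a$ and $b$. A local Reidemeister VI move near $b$ that swaps which strand is the highest lets me toggle between two bases $G_0^{(0)}$ and $G_0^{(-1)}$ realizing $R^0=0$ and $R^0=-1$ respectively. A direct count of cusps and writhes in the front projection yields explicit triples $(tb_i^0,rot_i^0)$; each cycle is visibly an unknot.

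\textbf{Step 2 (effect of stabilizations).} Let $p_i, n_i \ge 0$ be the number of positive, resp.\ negative, stabilizations added to edge $e_i$ (oriented $a\to b$), and set $m_i = p_i+n_i$, $s_i = p_i-n_i$. Because $e_i$ lies in exactly two cycles and each stabilization drops the $tb$ of a containing cycle by $1$ while changing its $rot$ by $\pm 1$ according to the induced orientation of $e_i$ inside that cycle, one obtains
\begin{align*}
tb_1-tb_1^0 &= -(m_1+m_2), & rot_1-rot_1^0 &= s_1-s_2,\\
tb_2-tb_2^0 &= -(m_1+m_3), & rot_2-rot_2^0 &= s_1-s_3,\\
tb_3-tb_3^0 &= -(m_2+m_3), & rot_3-rot_3^0 &= s_2-s_3.
\end{align*}
The first column solves uniquely for $(m_1,m_2,m_3)$; the second is consistent precisely because $R = R^0$, and its solution space is the line $\{(s_1,s_2,s_3)+(c,c,c):c\in\Z\}$.

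\textbf{Step 3 (non-negativity and parity).} I must then realize $(p_i,n_i)\in\Z_{\ge 0}^6$, equivalently $m_i\in\Z_{\ge 0}$ with $m_i\ge|s_i|$ and $m_i\equiv s_i\pmod 2$. Adding a matched pair of one positive and one negative stabilization to a single edge of the base increases that $m_i^0$ by $2$, decreases by $2$ the $tb^0$ of the two cycles through $e_i$, and leaves all $rot_i^0$'s, $R^0$, every parity, and the unknottedness of each cycle unchanged; by iterating this on each edge I can enlarge the base so that the required $m_i$'s become as large as desired, settling both non-negativity and $m_i\ge|s_i|$. The parity condition $m_i\equiv s_i\pmod 2$ is a condition on the common shift $c$ together with the parities of $(tb_i^0,rot_i^0)$; using the hypothesis that $tb_i+rot_i$ is odd for each $i$ and $R=R^0$, a direct computation shows that exactly one parity class of $c$ works, after possibly pre-adjusting the base by a single stabilization on a single edge (which preserves $R$, as checked directly from the column formulas above, and toggles the parities of two of the pairs $(tb_j^0,rot_j^0)$ to bring them into agreement with the target).

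\textbf{Step 4 (finish).} Performing $p_i$ positive and $n_i$ negative stabilizations on each edge $e_i$ of the appropriately enlarged base produces a Legendrian $\theta$-graph realizing $(tb_1,tb_2,tb_3)$ and $(rot_1,rot_2,rot_3)$, and all three cycles remain unknots because stabilization preserves topological type. The principal obstacle is Step 3, where non-negativity, magnitude, and parity of the $m_i$ must be juggled simultaneously; the ``opposite-pair'' move and the common-shift freedom in $c$ make this a finite and explicit bookkeeping exercise rather than a genuine difficulty.
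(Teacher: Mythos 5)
Your Step 2 bookkeeping is correct (each zig-zag on $e_i$ lowers the $tb$ of both cycles through $e_i$ and shifts their rotation numbers by $\pm1$ according to orientation), but it is exactly this structure that defeats the approach: because every edge of a $\theta$-graph lies on two of the three cycles, any graph built from your planar base (which has $tb^0=(-1,-1,-1)$, $rot^0=(0,0,0)$) by stabilizing edges has $tb$-deficits $\Delta_i=tb_i^0-tb_i$ satisfying the triangle-type inequalities $\Delta_1+\Delta_2-\Delta_3=2m_1\ge 0$, and cyclically. These inequalities are not implied by the hypotheses of the theorem. For instance $tb=(-1,-1,-3)$, $rot=(0,0,0)$ is admissible ($R=0$, parities and $tb_i+|rot_i|\le -1$ all hold), yet your equations force $m_1+m_2=0$ and $m_1+m_3=0$, hence $m_1=m_2=m_3=0$ and $tb_3=-1$, a contradiction. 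The matched-pair enlargement of Step 3 cannot repair this: declaring some zig-zags to be ``part of the base'' does not change the linear system, since the total number of zig-zags on each edge is still pinned down by the three $tb$-equations (indeed, adding matched pairs to the base only lowers $tb^0$ and shrinks the $\Delta_i$). The missing ingredient, which is the heart of the paper's construction, is a second kind of local feature: a box of crossings (a clasp) between a \emph{pair} of edges, as in the Eliashberg--Fraser standard form of the unknot. Such a box changes the writhe, hence the $tb$, of the single cycle containing both strands, while leaving the other two cycles and all rotation numbers untouched; this decouples the three $tb$'s. The paper then splits into six cases according to which of $r_1,r_2,r_3$ dominates (or just fails to dominate) and exhibits explicit fronts (Figures~\ref{fig-r1} through \ref{fig-C}) combining stabilization boxes, which set the $rot_i$, with clasp boxes, which set the $tb_i$, and verifies the cusp and writhe counts directly.

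Two secondary problems: in Step 1 you propose to toggle between $R^0=0$ and $R^0=-1$ by a Reidemeister VI move near $b$, but RVI is a Legendrian isotopy move, so it preserves every $rot_i$ and hence $R$; the two values of $R$ come from genuinely different embeddings (different cusp patterns at $b$, e.g.\ parallel versus antiparallel strand orderings, which differ by a crossing between two edges, not by an isotopy). Also, the parity repair ``by a single stabilization on a single edge'' does not do what you want: a single stabilization changes two of the $\Delta_j$ by one each, so the solvability parity $\Delta_1+\Delta_2+\Delta_3 \pmod 2$ is unchanged. These points are fixable, but the triangle-inequality obstruction above is not fixable within a stabilization-only construction.
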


\begin{proof} Let $tb=(tb_1, tb_2, tb_3)$ and $rot=(rot_1, rot_2, rot_3)$  be triples of integers as in the hypothesis.
We give front projections of Legendrian $\theta-$graphs realizing these triples.
Let $r_i:= |rot_i|$, for $i=1, 2, 3$.
We differentiate our examples according to the values of $rot_1$, $rot_2$ and $rot_3$ and the relationship between $r_1, r_2$ and $r_3$.

\begin{table}[htdp]
\begin{center}
\begin{tabular}{|c|c|c|c|c|c|}
\hline
Case & $rot_1$ & $rot_2$ & $rot_3$ & $R=0$ & $R=-1$\\ \hline
(i) & + & + & + & $r_1-r_2+r_3=0$ & $r_1+r_3+1=r_2$\\
  \hline
  (ii) & + & + &  $-$& $r_1-r_2-r_3=0$ & $r_1+1=r_2+r_3$\\
  \hline
(iii) & + & $-$ & + & $r_1+r_2+r_3=0$ & $r_1+r_2 +r_3 +1=0$\\
  \hline
(iv) & + & $-$ & $-$& $r_1+r_2-r_3=0$ & $r_1+r_2+1=r_3$\\
  \hline
(v) & $-$ & + & + & $-r_1-r_2+r_3=0$ & $r_1+r_2=r_3+1$\\
  \hline
(vi) & $-$ & + & $-$ & $-r_1-r_2-r_3=0$ & $r_1+r_2+r_3=1$\\
  \hline
(vii) & $-$ & $-$ & + & $-r_1+r_2+r_3=0$ & $r_1=r_2+r_3+1$\\
  \hline
(viii) & $-$ &  $-$ & $-$ & $-r_1+r_2-r_3=0$ & $r_1+r_3=r_2+1$\\
  \hline
\end{tabular}
\end{center}
\label{sums1}
\caption{ $+$ stands for $rot_i\ge 0$ and $-$ stands for $rot_i<0$}

\end{table}

When $R=0$, for each case (i)--(viii), there is a choice of indices $i,j,k$ with  $\{i,j,k\}=\{1,2,3\}$
 such that $r_i \ge r_j+r_k$ (in fact, $r_i = r_j+r_k$).

When $R=-1$, for each case (i), (iv), (vi) and (vii) there is a choice of indices $i,j,k$ with $\{i,j,k\}=\{1,2,3\}$  such that $r_i \ge r_j+r_k$;  
case (iii) is not realized; and for each case  (ii), (v) and (viii), there is a choice of indices $i,j,k$ with  $\{i,j,k\}=\{1,2,3\}$ such that $r_i + 1=r_j+r_k$. 

Thus any realizable  $(rot_1, rot_2, rot_3)$ falls under at least one of the following six conditions:  (1) $r_1 \ge r_2+r_3$,  (2) $r_2 \ge r_1+r_3$, (3) $r_3 \ge r_1+r_2$, (4) $r_1 + 1=r_2+r_3$, (5) $r_2 + 1=r_1+r_3$ and (6) $r_3 + 1=r_1+r_2$.
We describe ways of realizing the invariants for these six cases.

The cycles $\C_1, \C_2$ and $\C_3$ are as described earlier.   
The choice of orientations for the three cycles implies that $e_1$ is oriented from $a$ to $b$ in both $\C_1$ and $\C_2$, while  $e_3$ is oriented from $b$ to $a$ in both $\C_2$ and $\C_3$.
A box along a single strand designates number of stabilizations along the strand.
We take

\begin{itemize}
\item $r_i$ positive stabilizations if $rot_i\ge 0$
\item $r_i$ negative stabilizations if $rot_i< 0$,
\end{itemize}
when edges $e_1$, $e_2$ and $e_3$ are oriented as in cycle $\C_i$.
A box along a pair of strands designates number of crossings between the two strands.
All the crossings are as those in Figure~\ref{fig-standard_unknotEF}.\\

\noindent {\bf Case 1.}$(\mathbf{ r_1\ge r_2+r_3})$  Figure~\ref{fig-r1} represents the front projection of a Legendrian $\theta-$graph with the prescribed $tb$ and $rot$.


\begin{figure}[htpb!]
\begin{center}
\begin{picture}(380, 40)
\put(0,0){\includegraphics{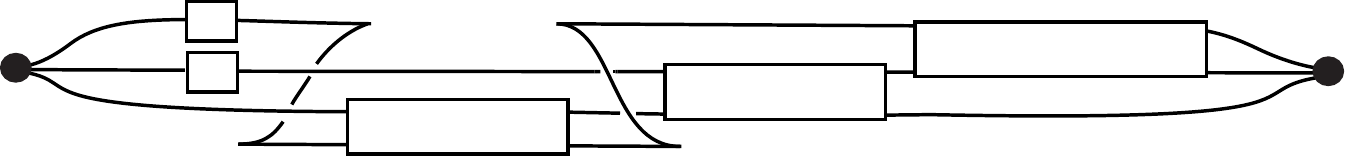}}
\put(57,37){\small $r_2$}
\put(57, 22){\small $r_3$}
\put(37,43){\small $e_1$}
\put(37,28){\small $e_2$}
\put(37,7){\small $e_3$}
\put(217,42){\small $e_1$}
\put(307,5){\small $e_3$}
\put(103,6){\small $-tb_2-r_2-1$}
\put(195,16){\small $-tb_3-r_3-1$}
\put(266,28){\small $-tb_1-r_2-r_3-1$}
\end{picture}
\caption{\small  
Case 1: $ r_1\ge r_2+r_3$.
}\label{fig-r1}
\end{center}
\end{figure}

 \noindent Since $tb_i+|rot_i|\le -1$, the integers $-tb_2-r_2-1$ and $-tb_3-r_3-1$ are non-negative. 
 Since $ r_1\ge r_2+r_3$, then $-tb_1-r_2-r_3-1\ge -tb_1-r_1-1\ge 0$. 
 So all of the indicated number of half  twists are non-negative integers as needed.  
 The number $-tb_1-r_2-r_3-1$  changes parity, according to whether $rot_1-  rot_2 +rot_3$ equals $-1$ or $0$.
  
We check that the Thurston-Bennequin number and the rotation number for this embedding have the correct values.
For a cycle $\C$ we use $$ tb(\C)= w(\C)-\frac{1}{2}\textrm{cusps}(\C)$$
$$  rot(\C) = \frac{1}{2}(\downarrow\textrm{cusps}(\C)-\uparrow\textrm{cusps}(\C)),$$
 where $w=$ writhe, $\textrm{cusps}=$ total number of cusps, $\downarrow\textrm{cusps}=$ number of down cusps, $\uparrow\textrm{cusps}=$ number of up cusps.
 \begin{itemize}
 \item $ tb(\C_1)= w(\C_1)-\frac{1}{2}\textrm{cusps}(\C_1)= (tb_1+r_2+r_3+3)-(r_2+r_3+3)=tb_1$
 \item $ tb(\C_2)= w(\C_2)-\frac{1}{2}\textrm{cusps}(\C_2) =  (tb_2+r_2+3)-(r_2+3)=tb_2$
  \item $ tb(\C_3)= w(\C_3)-\frac{1}{2}\textrm{cusps}(\C_3)= (tb_3+r_3+1)-(r_3+1)=tb_3$
 \end{itemize} 
 
 
\noindent If $rot_1-  rot_2 +rot_3=0$, then $-tb_1-r_2-r_3-1$ has the same parity as $-tb_1-r_1-1$.
 They are both even, since $tb_1$ and $rot_1$ have different parities.
This implies that at vertex $b$ the upper strand is $e_1$ and the middle strand is $e_2$. 
 \begin{itemize}
 \item $ rot(\C_1)=\frac{1}{2}(\downarrow\textrm{cusps}(\C_1)-\uparrow\textrm{cusps}(\C_1))$\\
$=\frac{1}{2}(2\cdot {\rm sgn}(rot_2)\cdot r_2 +3- 2\cdot {\rm sgn}(rot_3)\cdot r_3 -3) = rot_2-rot_3=rot_1$
 
  \item $ rot(\C_2) = \frac{1}{2}(\downarrow\textrm{cusps}(\C_2)-\uparrow\textrm{cusps}(\C_2))=    \frac{1}{2}(2\cdot {\rm sgn}(rot_2)\cdot r_2 + 3 - 3)=rot_2 $
 \item  $rot(\C_3) = \frac{1}{2}(\downarrow\textrm{cusps}(\C_3)-\uparrow\textrm{cusps}(\C_3)) = \frac{1}{2}(2\cdot  {\rm sgn}(rot_3)\cdot r_3 +1 - 1) =rot_3 $\\
 \end{itemize}
 

 \noindent If $rot_1-  rot_2 +rot_3=-1$, then$-tb_1-r_2-r_3-1$ has different parity than $-tb_1-r_1-1$. 
Since $tb_1$ and $rot_1$ have different parities, $-tb_1-r_1-1$ is even and $-tb_1-r_2-r_3-1$ is odd.
This implies that at vertex $b$ the upper strand is $e_2$ and the middle strand is $e_1$. 
 Computations for $ rot(\C_2) $ and $ rot(\C_3) $ are the same as above.
 \begin{itemize}
  \item $ rot(\C_1) =\frac{1}{2}(\downarrow\textrm{cusps}(\C_1)-\uparrow\textrm{cusps}(\C_1))$\\
  $= \frac{1}{2}( 2\cdot {\rm sgn}(rot_2)\cdot r_2 +2-  2\cdot {\rm sgn}(rot_3)\cdot r_3 -4) = rot_2-rot_3 -1 =rot_1  $\\
 \end{itemize} 
 
 In the remaining cases, a similar check may be done to verify that they have the correct $tb$ and $rot$.
 

\noindent {\bf Case 2.}$\mathbf{(r_2\ge r_1+r_3)}$ Figure~\ref{fig-r2} represents the front projection of a Legendrian $\theta-$graph with the prescribed $tb$ and $rot$.  
Since $r_2\ge r_1+r_3$, then $-tb_2-r_1-r_3-1\ge -tb_2-r_2-1\ge 0$. 
 

\begin{figure}[htpb!]
\begin{center}
\begin{picture}(410, 60)
\put(0,0){\includegraphics{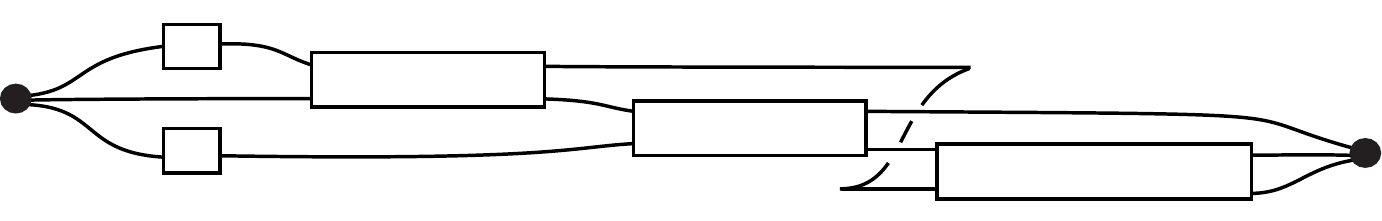}}
\put(51,48){\small $r_1$}
\put(52, 18){\small $r_3$}
\put(96,38){\small $-tb_1-r_1-1$}
\put(188,24){\small $-tb_3-r_3-1$}
\put(276,12){\small $-tb_2-r_1-r_3-1$}
\end{picture}
\caption{\small  
Case 2: $r_2\ge r_1+r_3.$
}
\label{fig-r2}
\end{center}
\end{figure}

\noindent {\bf Case 3.}$\mathbf{(r_3\ge r_1+r_2)}$ Figure~\ref{fig-r3} represents the front projection of a Legendrian $\theta-$graph with the prescribed $tb$ and $rot$.  
As $r_3\ge r_1+r_2$, then $-tb_3-r_1-r_2-1\ge -tb_3-r_3-1\ge 0$.


\begin{figure}[htpb!]
\begin{center}
\begin{picture}(410, 55)
\put(0,0){\includegraphics{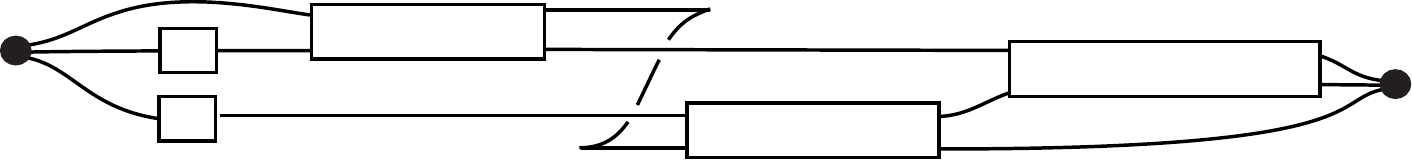}}
\put(50,30){\small $r_1$}
\put(50, 10){\small $r_3$}
\put(94,35){\small $-tb_1-r_1-1$}
\put(205,6){\small $-tb_2-r_2-1$}
\put(295,24){\small $-tb_3-r_1-r_2-1$}
\end{picture}
\caption{\small 
Case 3: $r_3\ge r_1+r_2.$
}
\label{fig-r3}
\end{center}
\end{figure}

\noindent {\bf Case 4.}$\mathbf{(r_1+1=r_2+r_3)}$  In this case the graph in Figure~\ref{fig-B} realizes $(tb, rot)$. 
Since $r_2+r_3=r_1+1$, we have $-tb_1-r_2-r_3=-tb_1-r_1-1\ge 0$.


\begin{figure}[htpb!]
\begin{center}
\begin{picture}(430, 55)
\put(0,0){\includegraphics{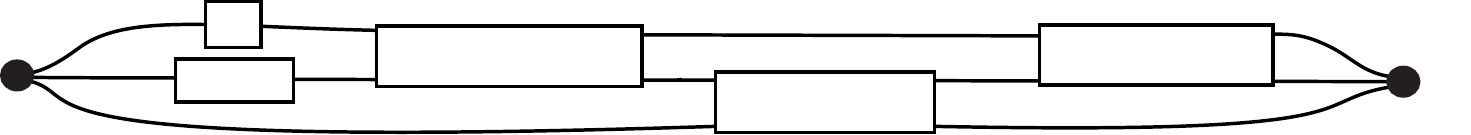}}
\put(64,30){\small $r_2$}
\put(55, 13){\small $r_3-1$}
\put(113,20){\small $-tb_1-r_2-r_3$}
\put(215,7){\small $-tb_3-r_3$}
\put(302,20){\small $-tb_2-r_2-1$}
\end{picture}
\caption{\small Case 4: $r_1+1=r_2+r_3.$
}
\label{fig-B}
\end{center}
\end{figure}

\noindent {\bf Case 5.}$\mathbf{(r_2+1=r_1+r_3)}$  For this case the graph in Figure~\ref{fig-A} realizes $(tb, rot)$.  Given $r_1+r_3=r_2+1$, we have that $-tb_2-r_1-r_3+1=-tb_2-r_2> 0$.


\begin{figure}[htpb!]
\begin{center}
\begin{picture}(430, 55)
\put(0,0){\includegraphics{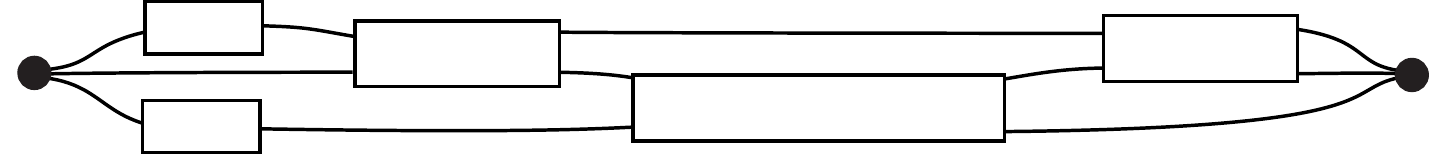}}
\put(44,34){\small $r_1-1$}
\put(44, 6){\small$r_3-1$}
\put(108,26){\small$-tb_1-r_1$}
\put(193,11){\small $-tb_2-r_1-r_3+1$}
\put(323,28){\small $-tb_3-r_3$}
\end{picture}
\caption{\small Case 5: $r_2+1=r_1+r_3.$
}
\label{fig-A}
\end{center}
\end{figure}

\noindent {\bf Case 6.}$\mathbf{(r_3+1=r_1+r_2)}$  In this case the graph in Figure~\ref{fig-C} realizes $(tb, rot).$
Since $r_1+r_2=r_3+1$, we have $-tb_3-r_1-r_2=-tb_3-r_3-1\ge 0$.


\begin{figure}[htpb!]
\begin{center}
\begin{picture}(4300, 55)
\put(0,0){\includegraphics{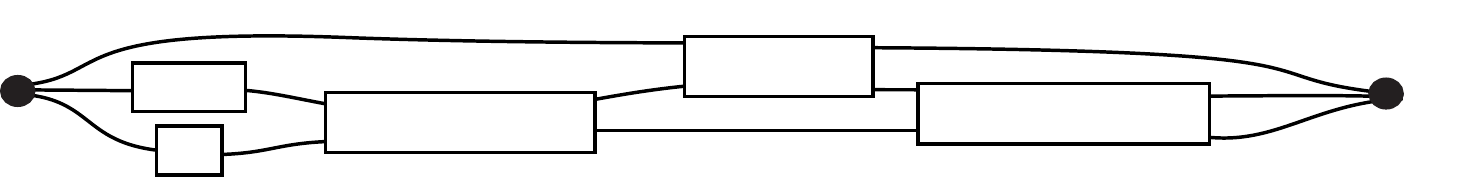}}
\put(41,23){\small  $r_1-1$}
\put(50, 5){\small $r_2$}
\put(100,13){\small $-tb_3-r_2-r_1$}
\put(205,29){\small $-tb_1-r_1$}
\put(275,15){\small $-tb_2-r_2-1$}
\end{picture}
\caption{\small Case 6: $r_3+1=r_1+r_2.$
}
\label{fig-C}
\end{center}
\end{figure}

This completes the proof.  
\end{proof}


\subsection{Topologically planar $\theta-$graphs are not Legendrian simple}We ask whether the invariants $tb$ and $rot$ determine the Legendrian type of a planar $\theta-$graph. 
If we do not require that the cyclic order of the edges around the vertex $a$ (or $b$) is the same in both embeddings, the answer is negative.  The following is a counterexample in this case.  

\begin{example}
The two graphs in Figure~\ref{fig-not-determined} have the same invariants but they are not Legendrian isotopic.
Let $\C_1$, $\C_2$ and $\C_3$ be the three cycles of $G$ determined by  the pairs of edges \{$e_1$, $e_2$\}, \{$e_1$, $e_3$\} and  \{$e_2$, $e_3$\}, respectively.
Let $\C'_1$, $\C'_2$ and $\C'_3$ be the three cycles of $G'$ determined by  \{$f_2$, $f_1$\}, \{$f_2$, $f_3$\} and  \{$f_1$, $f_3$\}, respectively.
The cycles have $tb(\C_1)=tb(\C'_1)=-1$, $tb(\C_2)=tb(\C'_2)=-5 $,  $tb(\C_3)=tb(\C'_3)=-3$ and $rot(\C_i)=rot(\C'_i)=0$ for $i=1,2,3$.

Assume the two graphs are Legendrian isotopic. 
Since the cycles with same invariants should correspond to each other via the Legedrian isotopy (which we denote by $\iota$), the edges correspond as $e_1\leftrightarrow \iota(e_1)=f_2$,  $e_2\leftrightarrow \iota(e_2)=f_1$ and $e_3\leftrightarrow \iota(e_3)=f_3$.
But at both vertices of $G$ the (counterclockwise) order of edges in the contact plane is $e_1-e_2-e_3$ and at both vertices of $G'$ the (counterclockwise) order of edges in the contact plane is $\iota(e_1)-\iota(e_3)-\iota(e_2)$. 
This leads to a contradiction, since a Legendrian isotopy preserves the cyclic order of edges at each vertex.
\label{example-not-determined}
\end{example}


\begin{figure}[htpb!]
\begin{center}
\begin{picture}(390, 55)
\put(0,0){\includegraphics{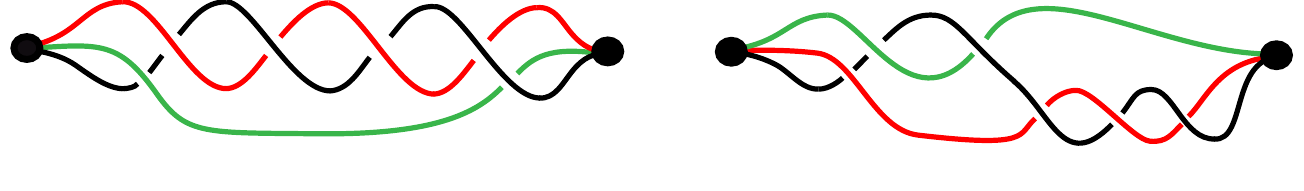}}
\put(80,-5){$G$}
\put(300, -5){$G'$}
\put(19,54){\tiny $e_1$}
\put(27, 43){\tiny$e_2$}
\put(20,27){\tiny$e_3$}
\put(230,55){\tiny $f_1$}
\put(233,43){\tiny $f_2$}
\put(225,24){\tiny $f_3$}
\end{picture}
\caption{\small Non-Legendrian isotopic graphs with the same invariants. 
}
\label{fig-not-determined}
\end{center}
\end{figure}

\begin{corollary}
The invariants $tb$ and $rot$ are not enough to distinguish the Legendrian class of an $n\theta-$graph for $n\ge 3$. 
\end{corollary}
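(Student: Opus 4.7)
The $n=3$ case is exactly Example~\ref{example-not-determined}. For $n\ge 4$, the plan is to bootstrap from that example by attaching $n-3$ additional edges to each of $G$ and $G'$ in identical ways, producing $n\theta$-graphs $G_n$ and $G_n'$ that still have matching invariants but inherit the same cyclic-order obstruction.

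Concretely, I first put $G$ and $G'$ in standard form near their two vertices using Lemma~\ref{NearVer}, so that the three original strands appear as in Figure~\ref{fig-two-vertices}. I then glue on $n-3$ new Legendrian arcs $h_4,\dots,h_n$, placed above the top strand near each vertex, using literally the same front projection in both $G_n$ and $G'_n$. Because these arcs are attached on the same side of the three-strand bundle and in the same order, the cyclic-order data at each vertex of $G_n$ and $G'_n$ extends the original cyclic orders from the example: the subword coming from $e_1,e_2,e_3$ in $G_n$ is $e_1\mbox{-}e_2\mbox{-}e_3$, while under the cycle-forced identification $e_1\!\leftrightarrow\! f_2,\ e_2\!\leftrightarrow\! f_1,\ e_3\!\leftrightarrow\! f_3$ the subword in $G'_n$ reads $f_2\mbox{-}f_1\mbox{-}f_3$, exactly the discrepancy that was used in Example~\ref{example-not-determined}.

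The main bookkeeping is to check that $tb$ and $rot$ agree cycle-by-cycle between $G_n$ and $G'_n$. The cycles of an $n\theta$-graph are exactly the unordered pairs of edges, so they split into three kinds: \textbf{(a)} pairs from $\{e_1,e_2,e_3\}$ versus $\{f_1,f_2,f_3\}$, \textbf{(b)} one old edge paired with one new edge, \textbf{(c)} pairs of new edges. Kind (a) is handled verbatim by Example~\ref{example-not-determined}; kind (c) is trivial since the new edges sit identically in the two graphs. For kind (b), I would choose the $h_j$ to carry enough positive and negative stabilizations that, first, the resulting $tb$ values are all distinct from $\{-1,-3,-5\}$ and from each other (so the three "old" cycles are singled out by their invariants), and second, the $tb$ and $rot$ of each cycle $e_i\cup h_j$ depend only on the writhe/cusp data of $h_j$ together with the isolated writhe/cusp contribution of the individual arc $e_i$ (respectively $f_{\sigma(i)}$), both of which can be read off of the standard-form fronts and arranged to match edge-by-edge.

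The step I expect to be the main obstacle is precisely this last matching in kind (b): even though $G$ and $G'$ are not globally Legendrian isotopic, I need the \emph{individual arcs} $e_i$ and $f_{\sigma(i)}$ to contribute the same writhe, cusp count, and orientation data so that every mixed cycle matches. I would handle this by taking the front projections of $G$ and $G'$ realizing the invariants $(-1,-3,-5)$ as in the counterexample, and modifying them so each $e_i$ agrees arc-by-arc with $f_{\sigma(i)}$ outside a small ball where the twisting that distinguishes $G$ from $G'$ is concentrated; this is compatible with the cyclic-order obstruction because the order of strands at the vertices is a germ invariant. Once the arc-matching is in hand, any hypothetical Legendrian isotopy $G_n\to G'_n$ must carry the unique $tb\in\{-1,-3,-5\}$ cycles to their counterparts, forcing the edge identification and reproducing the cyclic-order contradiction of Example~\ref{example-not-determined}.
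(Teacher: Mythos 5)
Your proposal is correct and takes essentially the same route as the paper: the paper's proof is a single sentence stating that for $n\ge 4$ one obtains a counterexample from the pair $(G,G')$ of Example~\ref{example-not-determined} by adding $n-3$ unknotted edges on top of the three existing ones in both graphs. Your additional bookkeeping (matching the invariants of the mixed cycles and forcing the edge identification so the cyclic-order contradiction persists) simply fills in details the paper leaves implicit, and it is consistent with the paper's argument.
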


\begin{proof}
For $n\ge 4$, a pair of graphs with the same invariants but of different Legendrian type can be otained from $(G, G')$ in Example \ref{example-not-determined} by adding $n-3$ unknotted edges at the top of the three existing ones.
\end{proof}

\section{Legendrian Ribbons and transverse push-offs}
\label{sec-ribbon}

In this section we work with Legendrian ribbons of $\theta-$graphs.  
We examine the relationship between the Legendrian graph and the boundary of its ribbon, the transverse push-off.  
The transverse push-off is another invariant of Legendrian graphs.
We explore whether it contains more information than the classical invariants rotation number and Thurston-Bennequin number.  
We determine the number of components and the self linking number for the push-off of a Legendrian $\theta-$graph.  
In the special case of topologically planar graphs, we prove that the topological type of the transverse push-off of a $\theta-$graph is that of a pretzel-type curve whose coefficients are determined by the Thurston-Bennequin invariant of the graph.\\

Let $g$ be a Legendrian graph.  A \textit{ribbon for $g$} is a compact oriented surface $R_g$ such that:
\be
\item $g$ in contained in the interior of $R_g$;
\item there exists a choice of orientations for $R_g$ and for $\xi$ such that $\xi$ has no negative tangency with $R_g$; 
\item there exists a vector field $X$ on $R_g$ tangent to the characteristic foliation whose time flow $\phi_t$ satisfies 
$\cap_{t\geq 0}\,\phi_t (R_g)=g$; and  
\item the boundary of $R_g$ is transverse to the contact structure.\\
\ee

The following is a construction which takes a graph in the front projection and produces its ribbon viewed in the front projection. 
Portions of this construction were previously examined by Avdek in \cite{A} (algorithm 2, steps 4--6).
Starting with a front projection of the graph, we construct a ribbon surface containing the graph as described in Figure~\ref{fig-ribbon}.
\be
\item[(a)] to a cusp free portion of an edge we attach a band with a single negative half twist,
\item[(b)] to each left and right cusp along a strand we attach disks containing a positive half twist,
\item[(c,d)] to each vertex we attach twisted disks as in Figure~\ref{fig-ribbon}(c,d),
\item[(e)] crossings in the diagram of the graph are preserved.
\ee


\begin{figure}[htpb!]
\begin{center}
\begin{picture}(400, 110)
\put(0,0){\includegraphics{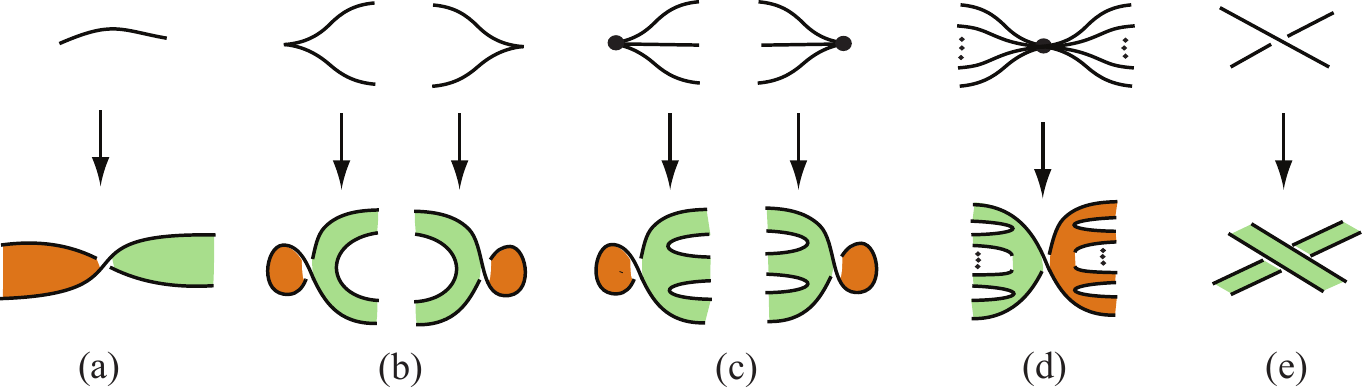}}
\end{picture}
\caption{\small Attaching a ribbon surface to a Legendrian graph. The two sides of the surface are marked by different colors.}
\label{fig-ribbon}
\end{center}
\end{figure}

Legendrian ribbons were first introduced by Giroux \cite{G} to have a well-defined way to contract a contact handlebody onto the Legendrian graph at the core of the handlebody.  
We are interested in some particular features of Legendrian ribbons.  
The boundary of a Legendrian ribbon is an oriented transverse link with the orientation inherited from the ribbon surface.  
The ribbon associated with a given Legendrian graph is unique up to isotopy and therefore gives a natural way to associate a transverse link to the graph.  
\begin{definition} The \textit{transverse push-off} of a Legendrian graph is the boundary of its ribbon.  
\end{definition}
In the case of Legedrian knots the above definition gives a two component link of both the positive and negative transverse push-offs.  
However, with graphs the transverse push-off can have various numbers of components, depending on connectivity and Legendrian type.  
The transverse push-off is a new invariant of Legendrian graphs.  

\subsection{Self-linking of transverse push-offs.} 

Here we determine possible self-linking numbers and the number of components of the transverse push-off of a Legendrian $\theta-$graph.

\begin{figure}[htpb!]
\begin{center}
\begin{picture}(460, 120)
\put(0,0){\includegraphics{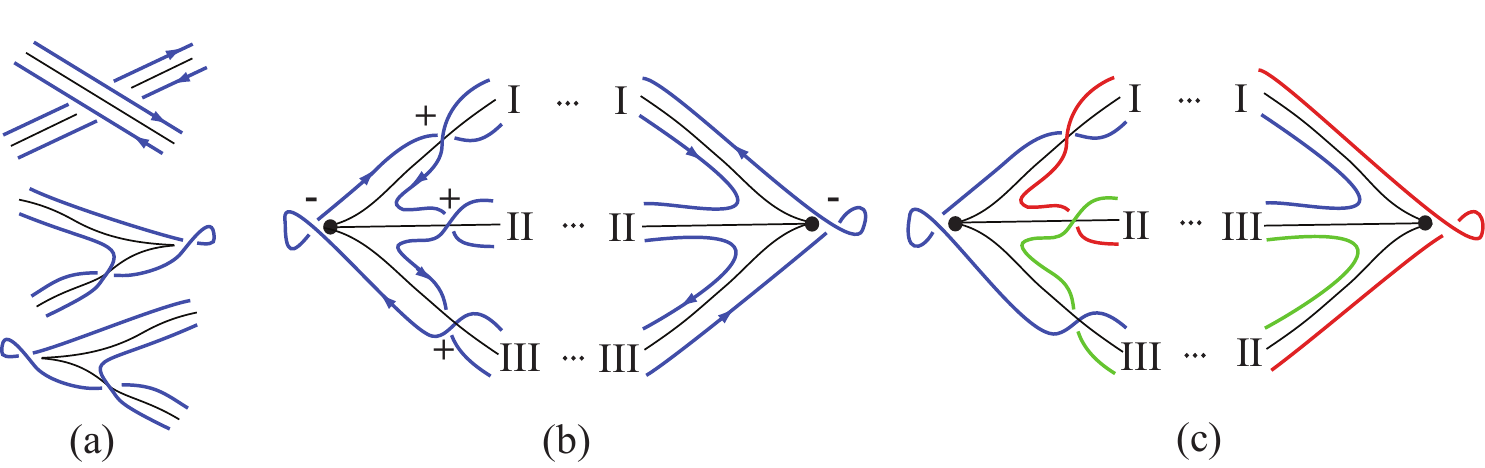}}
\end{picture}
\caption{Transverse push-off of a Legendrian $\theta-$graph with (b) one component or (c) three components.} 
\label{fig-transverse-pushoff}
\end{center}
\end{figure}

\begin{theorem}\label{trans-push-off-sl}
The transverse push-off of a Legendrian $\theta-$graph is either 
 a transverse knot $K$ with $sl=1$ or a three component transverse link whose three components are the transverse push-offs of the three Legendrian cycles given the correct orientation.
\end{theorem}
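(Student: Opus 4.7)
The plan is to split the argument into a topological count of boundary components and a geometric identification of those components, followed by a self-linking computation that covers both cases uniformly.

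\emph{Step 1 (counting components).} The ribbon $R_g$ deformation retracts onto $g$, so $\chi(R_g) = \chi(g) = V - E = 2 - 3 = -1$. Because $R_g$ is oriented by definition, the identity $\chi(R_g) = 2 - 2\,\mathrm{genus}(R_g) - b$ forces $2\,\mathrm{genus}(R_g) + b = 3$, and hence $b \in \{1, 3\}$. So the transverse push-off has either one component (with $R_g$ a once-punctured torus) or three components (with $R_g$ a pair of pants).

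\emph{Step 2 (self-linking).} I would invoke the general principle for a compact oriented surface $\Sigma$ with transverse boundary: $sl(\partial \Sigma) = \chi(\Sigma_-) - \chi(\Sigma_+)$, where $\Sigma_\pm$ denotes the subsurface on which the characteristic foliation is oriented in agreement or disagreement with $\xi$. For a Legendrian ribbon, the no-negative-tangency axiom in the definition forces $\Sigma_- = \emptyset$, so $sl(\partial R_g) = -\chi(R_g) = 1$. In particular, when $b = 1$ this gives the desired equality $sl(K) = 1$.

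\emph{Step 3 (identifying the three components).} When $b = 3$, the ribbon $R_g$ is a pair of pants with spine $g$, and each cycle $\C_i \subset g \subset R_g$ is an essential simple closed curve that separates one of the three boundary circles from the other two. A tubular annular neighborhood of $\C_i$ in $R_g$ therefore has one of its two boundary circles coinciding with a component of $\partial R_g$, while the other lies in the interior of $R_g$. The circle on $\partial R_g$ is a transverse push-off of $\C_i$, and the orientation coming from the no-negative-tangency condition on $R_g$ identifies it with the \emph{positive} transverse push-off, as claimed.

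The main obstacle is the orientation check in Step 3: although the topological identification of each boundary circle with a cycle is immediate from the pair-of-pants structure, one still needs to verify that the resulting transverse class is the positive rather than negative push-off. I expect the cleanest route is a direct local model near each of the two trivalent vertices and each cusp, using the ribbon construction in Figure~\ref{fig-ribbon}, which fixes the orientation convention on $R_g$ and forces the push-off to come out positive.
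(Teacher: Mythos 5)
Your proposal is correct in outline but takes a genuinely different route from the paper, and the trade-offs are worth spelling out. For the component count you use the Euler characteristic of the ribbon: $\chi(R_g)=\chi(g)=-1$ together with orientability and connectedness forces $(\mathrm{genus},b)\in\{(1,1),(0,3)\}$, hence $b\in\{1,3\}$; the paper instead standardizes the graph near its two vertices (Lemma~\ref{NearVer}), splits into the ``parallel'' and ``antiparallel'' vertex configurations, and traces the ribbon boundary in the front projection (Figure~\ref{fig-transverse-pushoff}). Your count is cleaner, while the paper's diagrammatic argument additionally identifies which case occurs for a given graph. For the self-linking number the paper computes $sl(T)$ directly as the writhe of the front diagram of $\partial R_g$ (cusps and crossings of the graph contribute cancelling pairs; each edge band contributes one positive crossing and each vertex disk one negative crossing, giving $3-2=1$), whereas you invoke $sl(\partial\Sigma)=\chi(\Sigma_-)-\chi(\Sigma_+)$ with $\Sigma_-=\emptyset$. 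As stated, that formula applies to convex surfaces, or to surfaces with generic characteristic foliation via $sl=(e_--h_-)-(e_+-h_+)$; the ribbon is degenerate, with a whole graph of tangencies along $g$, so you need either a small perturbation making the singularities isolated (they remain positive, since the sign of a tangency is an open condition), after which $sl=-\chi(R_g)=1$, or a direct diagrammatic count as in the paper. Finally, your Step 3 is the right idea: in the pair-of-pants case each boundary circle cobounds with a cycle $\C_i$ an annulus contained in $R_g$, tangent to $\xi$ exactly along $\C_i$ and transverse elsewhere, so it is a transverse push-off of $\C_i$; the orientation worry is milder than you suggest, because with its canonical (positively transverse) orientation such a boundary circle is automatically the positive push-off of $\C_i$ equipped with one of its two orientations, which is all the statement requires. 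The paper, reading its standardized diagram, goes further and pins down the orientations, identifying the components as the positive push-offs of $\bar{\C_1}$, $\C_2$ and $\bar{\C_3}$.
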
 
\begin{proof}
Before working with the transverse push-off of a Legendrian $\theta-$graph, we will move the graph into a form that will simplify our argument.  
Given an arbitrary Legendrian $\theta-$graph, by Lemma \ref{NearVer}, it can be isotoped to an embedding where near the vertices it has a projection like that shown in Figure \ref{fig-two-vertices}.  
Label the arcs at the left vertex I, II, III from top to bottom.  
Then move the edges around the right vertex (using a combination of RVI and RIV) so that arc I is also in the top position.  
There are two possibilities for the order at the right vertex.  
The first case, where the arcs are I, II, III from top to bottom at the right vertex, shown in Figure \ref{fig-transverse-pushoff}(b), we will call \textit{parallel vertices}.  
The second case, where the arcs are I, III, II from top to bottom at the right vertex, shown in Figure \ref{fig-transverse-pushoff}(c), we will call \textit{antiparallel vertices}.   

Now we will focus on the number of components of the transverse push-off.  
For simplicity of book keeping we will place the negative half twists that occur on each cusp free portion of an edge to the left on that portion of the edge.  
For the projections shown in Figure \ref{fig-transverse-pushoff}(b,c) the portion of the graph not pictured could have any number of crossings and cusps.  
Along each edge, the top (resp. bottom) position of the strands is preserved through cusps and crossings.  
See Figure \ref{fig-transverse-pushoff}(a).  
So we see that the arc of the transverse push-off which lies above (resp. below) the Legendrian arc in the projection on one side of the diagram still lies above (resp. below) on the other side.  
Thus the number of components in the transverse push-off can be determined by a careful tracing of the diagrams in Figure \ref{fig-transverse-pushoff}(b,c).  
Therefore graphs with parallel vertices have a transverse push-off with one component, and graphs with antiparallel vertices have a transverse push-off with three components.


If the boundary of the Legendrian ribbon is a knot $T$,   then $sl(T)$ equals the signed count of crossings in a front diagram for $T$.
Crossings in the diagram of the graph and cusps along the three edges do not contribute to this count.
A cusp contributes a canceling pair of  positive and negative crossings.
A crossing contributes two negative and two positive crossings. See Figure~\ref{fig-transverse-pushoff}(a). 
Apart from these, there is one positive crossing along each edge and one negative crossing for every disk at each vertex, giving $sl(T)=1$. See Figure  \ref{fig-transverse-pushoff}(b).

If the boundary has three components $T_1$, $T_2$ and $T_3$, then they have the same self linking as the transverse push-offs of the cycles of the Legendrian graph with the correct orientation.  
Let $\bar{\mathcal{C}_i}$ be the cycle $\mathcal{C}_i$ with the opposite orientation.  
Then  $T_1$, $T_2$ and $T_3$, are the positive transverse push-offs of $\bar{\mathcal{C}_1}$,  $\mathcal{C}_2$ and $\bar{\mathcal{C}_3}$, respectively.
 \end{proof}

\subsection{Topologically Planar Legendrian $\theta-$graphs}

To be able to better understand the topological type of a Legendrian ribbon and the transverse 
push-off (its boundary) we will model the ribbon with a flat vertex graph.  
A \textit{flat vertex graph} (or \textit{rigid vertex graph}) is an embedded graph where the vertices are rigid disks with the edges being flexible tubes or strings between the vertices.  
This is in contrast with pliable vertex graphs (or just spatial graphs) where the edges have freedom of motion at the vertices.  
Both flat vertex  and pliable vertex graphs are studied up to ambient isotopy and have  sets of five Reidemeister moves.  
For both of them the first three Reidemeister moves are the same as those for knots and links and Reidemeister move IV consists of moving an edge over or under a vertex.  
See Figure \ref{fig-ReidemeisterIVandV}.  
For flat vertex graphs, Reidemeister move V is the move where the flat vertex is flipped over. 
For pliable vertex graphs, Reidemeister move V is the move where two of the edges are moved near the vertex in such a way that their order around the vertex is changed in the projection.   

For a \textit{Legendrian ribbon, the associated flat vertex graph} is given by the following construction:   a vertex is placed on each twisted disk -- where the original vertices were, and an edge replaces each band in the ribbon.  
The information that is lost with this model is the amount of twisting that occurs on each edge.  
The flat vertex graph model is particularly useful when working with the $\theta-$graph because it is a trivalent graph.  
We see with the following Lemma, the relationship between trivalent flat vertex and trivalent pliable vertex graphs.
 

\begin{figure}[htpb!]
\begin{center}
\begin{picture}(220, 145)
\put(0,0){\includegraphics{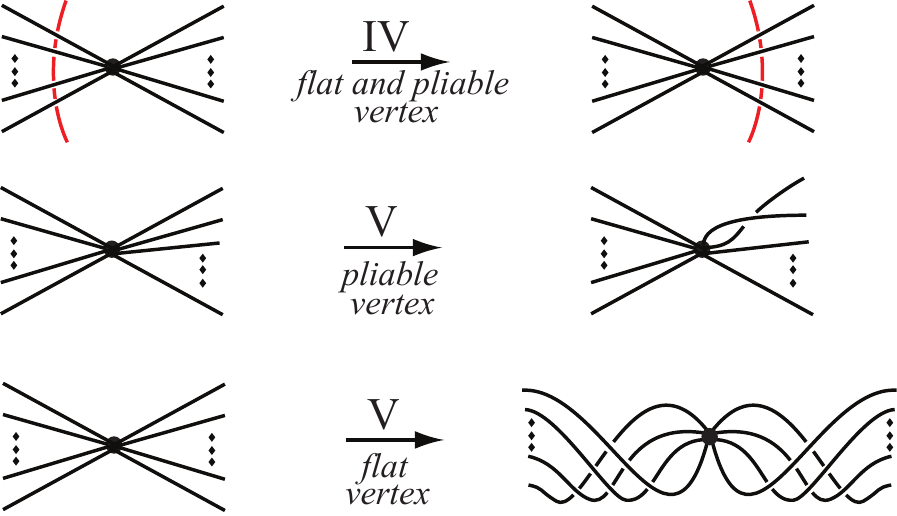}}
\end{picture}
\caption{\small Reidemeister moves IV and V for pliable and flat vertex graphs.}
\label{fig-ReidemeisterIVandV}
\end{center}
\end{figure}
 
\begin{lemma} 
For graphs with all vertices of degree 3 or less, the set of equivalent diagrams is the same for both pliable and flat vertex spatial graphs.  
\label{same-moves}
\end{lemma}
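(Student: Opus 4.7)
The plan is to verify that the Reidemeister moves defining each equivalence relation generate the same equivalence when every vertex has degree at most three. Moves I--IV are identical in the two theories, so the entire claim reduces to showing that flat move V (flipping a rigid vertex disk) and pliable move V (sliding two edges past each other at a vertex so their cyclic order changes) generate the same additional equivalence modulo moves I--IV. Both moves are supported in a small neighborhood of a single vertex, so it suffices to carry out the comparison locally at each vertex.

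I would then case-split on vertex degree. For a vertex of degree one there is no cyclic datum at all, so both versions of move V are trivial. For a vertex of degree two, a flat flip exchanges the two incident edge germs and so does a single pliable V, and the resulting diagrams agree after a planar isotopy supported near the vertex. The substantive case is a trivalent vertex: here the local combinatorial datum is a cyclic order on three edge germs, and there are precisely two such cyclic orders. The flat flip exchanges these two by definition. On the pliable side, transposing any two adjacent entries of a cyclic triple $(e_1,e_2,e_3)$ yields its reverse, so a single pliable V also exchanges the two cyclic orders. Hence each version of V can be simulated by a single application of the other, followed by an ambient isotopy, and the local equivalence relations at the vertex coincide.

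Concatenating these local equivalences along a sequence of moves shows that any sequence of pliable moves between two diagrams can be matched step by step by a sequence of flat moves and vice versa, so the two notions of diagrammatic equivalence agree for graphs of maximum degree three. The only delicate observation is that at a trivalent vertex there are exactly two cyclic orders on the edge germs, so that the flat flip and the pliable swap necessarily have the same effect; once this is noted, everything else is a routine local check. I would close by remarking that at a vertex of degree four or more the pliable V realizes permutations of the edge germs that cannot be obtained from a single flat flip, which is precisely why the hypothesis on vertex degree cannot be relaxed.
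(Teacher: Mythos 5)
Your argument is correct and is essentially the paper's proof: both reduce the question to comparing the two versions of Reidemeister move V (moves I--IV being identical in the two theories), and both rest on the fact that at a vertex of valence at most three the flat flip and the pliable edge-swap have the same local diagrammatic effect, which you justify by noting that a cyclic order on at most three edge germs has exactly two states and either move toggles between them. The paper makes this last point by inspection of its figure, so your cyclic-order observation just supplies the combinatorial content of that inspection (a fully careful check would also match the over/under data of the exchanged strands, but that is immediate from the local pictures).
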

\begin{proof} We follow notation in \cite[pages 699, 704]{K}.  The lemma can be reformulated to say, given the diagrams of two ambient isotopic pliable vertex graphs with maximal degree 3, these are also ambient isotopic as flat vertex graphs, and vice versa.  The Reidemeister moves for pliable vertex graphs and flat vertex graphs differ only in Reidemeister move V.  See Figure~\ref{fig-ReidemeisterIVandV}.  For pliable vertex graphs, Reidemeister move V is the move where two of the edges are moved near the vertex in such a way that this changes their order around the vertex in the projection.   For flat vertex graphs, Reidemeister move V is the move where the flat vertex is flipped over.  For vertices of valence at most 3, these two moves give the same diagrammatic results.  Thus the same sequence of Reidemeister moves can be used in the special case of graphs with maximal degree 3.   
\end{proof}

Here we set up the notation that will be used in the following theorem.  
For a Legendrian $\theta-$graph $G$, we consider a front projection in which the neighborhoods of the two vertices are as those in Figure~\ref{fig-two-vertices} and we denote its three cycles by 
$\C_1$, $\C_2$ and $\C_3$, following the notation of Section 2. 
Let $\textrm{cr}[e_i,e_j]$  be the signed intersection count of edges $e_i$ and $e_j$  in the cycle $\C_1$, $\C_2$ or  $\C_3$ which they determine.
Let $\textrm{cr}[e_i]$  be the signed self-intersection count of $e_i$.
Let $tb_1$, $tb_2$ and $tb_3$ be the Thurston-Bennequin numbers of $\C_1$, $\C_2$ and $\C_3$.

\begin{theorem} Let $G$ represent a topologically planar Legendrian $\theta-$graph with $tb = (tb_1, tb_2, tb_3)$.
Then the boundary of its attached ribbon is  an $(a_1, a_2, a_3)-$pretzel, where 
$a_1= tb_1+tb_2-tb_3$,
$a_2= tb_1+tb_3-tb_2$ and
$a_3= tb_2+tb_3-tb_1$.
\label{pretzel}
\end{theorem}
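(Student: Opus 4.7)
The plan is to reduce the ribbon $R_G$ to a standard position --- a planar thrice-punctured sphere carrying $a_i$ half-twists on its three bands --- and then recover the integers $a_i$ from the Thurston-Bennequin data of the three cycles.

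First I would pass to the flat vertex graph $G^{\flat}$ associated to $R_G$, which is trivalent. Because $G$ is topologically planar as a pliable vertex graph, Lemma~\ref{same-moves} (equivalence of pliable and flat vertex isotopy diagrams in the trivalent case) lets me ambient isotope $G^{\flat}$ to a planar flat vertex graph. The ribbon $R_G$ is then determined up to ambient isotopy in $\R^3$ by $G^{\flat}$ together with one integer per band recording the signed number of half-twists, so $R_G$ can be isotoped into standard position: the two vertex disks sit in a common plane, and each of the three bands, carrying $a_i$ half-twists (with the convention that $a_i>0$ denotes right-handed twists), joins them without additional braiding. By inspection of this standard diagram $\partial R_G$ is the $(a_1,a_2,a_3)$-pretzel link, so the theorem reduces to computing the $a_i$.

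To do this I would use the framing interpretation of $tb$. With the labelling of edges and cycles fixed just before Lemma~\ref{NearVer}, each cycle $\C_\ell$ uses two of the three edges, say $e_i$ and $e_j$, and the restriction of $R_G$ to $\C_\ell$ is an annulus formed by concatenating those two bands across the vertex disks. Since the Legendrian ribbon is tangent to $\xi$ along $G$, the framing this annulus induces on the Legendrian knot $\C_\ell$ is the contact framing, which equals $tb_\ell$; in the standard model the same framing reads off as one half of the total half-twist count, $\tfrac{1}{2}(a_i+a_j)$. The resulting system
\[
a_1+a_2=2\,tb_1,\qquad a_1+a_3=2\,tb_2,\qquad a_2+a_3=2\,tb_3
\]
inverts uniquely to yield $a_1=tb_1+tb_2-tb_3$, $a_2=tb_1+tb_3-tb_2$, and $a_3=tb_2+tb_3-tb_1$.

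The main difficulty I anticipate is the standardisation step. The flat vertex model encodes only the combinatorics at the vertex disks and not the twisting of the bands, so one must check both that each $a_i$ is a genuine isotopy invariant of the embedded ribbon (unaffected by the ambient isotopy carrying $G^{\flat}$ into planar form) and that, once $R_G$ has been put into standard position, its boundary really is the pretzel link with those parameters. This amounts to tracking how Reidemeister moves on $G^{\flat}$ act on the band framings; once that bookkeeping is finished, the pretzel identification and the framing calculation are routine.
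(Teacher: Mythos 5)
Your proposal is correct, and its first half coincides with the paper's: both pass to the trivalent flat vertex graph associated to the ribbon and invoke Lemma~\ref{same-moves} (plus topological planarity) to planarize it, concluding that $\partial R_G$ is some pretzel with the crossings concentrated as twists in the three bands. Where you genuinely diverge is in identifying the coefficients. The paper does this diagrammatically: it expresses the half-twist count $a_1$ on the band of $e_1$ in terms of cusps and signed crossings ($a_1=-[\textrm{cusps on }e_1]-1+2\,\textrm{cr}[e_1]+\textrm{cr}[e_1,e_2]+\textrm{cr}[e_1,e_3]-\textrm{cr}[e_2,e_3]$), tracks how each Reidemeister move for flat vertex graphs changes band twisting (including a separate claim verifying invariance under move IV), and then matches this against the writhe--cusp formula for $tb_1+tb_2-tb_3$. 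You instead use the framing interpretation: the annular neighborhood of each cycle $\C_\ell$ in $R_G$ is tangent to $\xi$ along $\C_\ell$, so it induces the contact framing, i.e.\ $tb_\ell$ relative to the Seifert framing; since this framing is an ambient isotopy invariant, in the standardized picture it reads off as $\tfrac{1}{2}(a_i+a_j)$, and the linear system $a_1+a_2=2tb_1$, $a_1+a_3=2tb_2$, $a_2+a_3=2tb_3$ gives the stated coefficients. This is cleaner: it replaces all of the paper's crossing/cusp bookkeeping and the RIV claim with linking-number invariance. One small remark: your closing worry that one must still ``track how Reidemeister moves on $G^{\flat}$ act on the band framings'' is unnecessary under your own scheme --- the annulus framings are isotopy invariants, so no move-by-move bookkeeping is needed; what both you and the paper do need (and both use at the same level of rigor) is that the embedded ribbon is determined by its flat-vertex spine together with the signed half-twist in each band, so that planarizing the spine puts $R_G$ in the standard pretzel position, and that the sign convention for half-twists is the one for which a band with $a$ half-twists contributes $a/2$ to the framing (this is consistent with the paper's construction, e.g.\ the standard planar $\theta$-graph with all $tb_i=-1$ yields one negative half-twist per band).
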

\begin{proof}
The proof will be done in two parts.  
First, the transverse push-off will be shown to be a pretzel knot or link.  
Second, it will be shown to be of the particular type of pretzel, an $(a_1, a_2, a_3)-$pretzel knot or link.

We first look at the ribbon as a topological object.  
If the ribbon can be moved through ambient isotopy  to a projection where the three bands do not cross over each other and come together along a flat disk, then the boundary of the ribbon would be a pretzel link with crossings only occurring as twists on each band.  
If we model the ribbon with a flat vertex graph this simplifies our question to whether the resulting flat vertex graph can be moved so that it is embedded in the plane.
The resulting graph is topologically planar because it is coming from a topologically planar Legendrian graph.
Thus by Lemma \ref{same-moves}, it can be moved to a planar embedding.  

In order to show the pretzel knot (or link) is an $(a_1, a_2, a_3)-$pretzel, we will look at what happens to the ribbon as the associated flat vertex graph is moved to a planar embedding.   
We will work with the Legendrian $\theta-$graph in the form shown in Figure~\ref{fig-two-vertices} near its vertices.  
We need to count the number of twists in the bands of the Legendrian ribbon once it has been moved to the embedding where the associated flat vertex graph is planar.  
We will prove $a_1= tb_1+tb_2-tb_3$ by writing each of these numbers in terms of the number of cusps and the number of singed crossings between the edges of the Legendrian graph.
The proofs for $a_2$ and $a_3$ are similar.  

We will use the following observations to be able to write $a_1$, the number of half twists in the band associated with edge $e_1,$ in terms of the number of cusps, $\textrm{cr}[e_i] $ and $\textrm{cr}[e_i,e_j].$
\be
\item Based on the construction of the ribbon surface, $c$ cusps on one of the edges contribute with $c+1$ negative half twists to the corresponding band.
\item We look at each of the Reidemeister moves for flat vertex graphs and see how they change the number of twists on the associated band of the ribbon surface.  

\be
\item A positive (negative) Reidemeister I move adds a full positive (negative) twist to the band. See Figure~\ref{fig-RedemeisterI-V}(a,b).
\item Reidemeister moves II, III and IV do not change the number of twists in any of the bands.
\item A Reidemeister V move adds a half twist on each of the three bands.  See Figure~\ref{fig-RedemeisterI-V}(c,d).
The sign of the half twists depends on the crossing, and which bands are crossed.
  If the bands have a positive (resp. negative) crossing, then they each have the addition of a positive (resp. negative) half twist, and the other band has the addition of a negative (resp. positive) half twist.  
\ee


\begin{figure}[htpb!]
\begin{center}
\begin{picture}(500, 180)
\put(0,0){\includegraphics[width=6in]{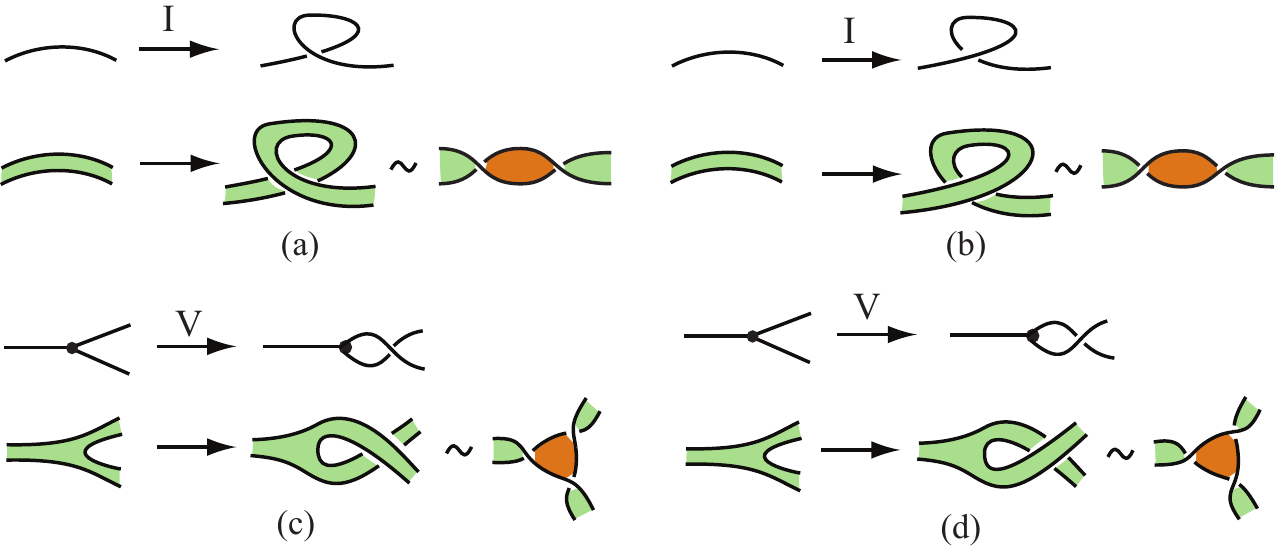}}
\end{picture}
\caption{\small (a) a positive Reidemeister I move adds a full positive twist to the band, (b) a negative Reidemeister I move adds a full negative twist to the band,
(c,d) a  Reidemeister V move adds  a half twist on each of the three bands.
}
\label{fig-RedemeisterI-V}
\end{center}
\end{figure}

\ee

Since we proved earlier that the flat vertex graph can be moved to a planar embedding, we know that all of the crossings between edges will be eventually removed through Reidemeister moves.  
Thus  this gives:
\[ a_1  =  -[\textrm{cusps on }e_1]-1+2\, \textrm{cr}[e_1] +\textrm{cr}[e_1,e_2]+ \textrm{cr}[e_1,e_3] -\textrm{cr}[e_2,e_3]\]
This count is easily seen to be invariant under moves RII and RIII, since these do not change the signed crossing of the diagram. 
We show it is invariant under move RIV at the end of the proof. 

Next, we describe $tb_1+tb_2-tb_3$ in terms of the number of cusps and the crossings between the edges.  
Recall, for a cycle $\C$ we use $$ tb(\C)= w(\C)-\frac{1}{2}\textrm{cusps}(\C).$$ 
Thus,
\begin{eqnarray*}
tb_1+tb_2-tb_3&=&w(\C_1)-\frac{1}{2}\textrm{cusps}(\C_1)+w(\C_2)-\frac{1}{2}\textrm{cusps}(\C_2)-w(\C_3)+\frac{1}{2}\textrm{cusps}(\C_3)\\
&=& \textrm{cr}[e_1, e_2] + \textrm{cr}[e_1] + \textrm{cr}[e_2] -\frac{1}{2}\big([\textrm{cusps on }e_1]+[\textrm{cusps on }e_2]+2\big)\\
& &+\textrm{cr}[e_1, e_3] + \textrm{cr}[e_1] + \textrm{cr}[e_3] -\frac{1}{2}\big([\textrm{cusps on }e_1]+[\textrm{cusps on }e_3]+2\big)\\
& &- \big( \textrm{cr}[e_2, e_3] + \textrm{cr}[e_2] + \textrm{cr}[e_3]\big) +\frac{1}{2}\big([\textrm{cusps on }e_2]+[\textrm{cusps on }e_3]+2\big)\\
&=& -[\textrm{cusps on }e_1]-1 + 2\, \textrm{cr}[e_1] +\textrm{cr}[e_1,e_2]+ \textrm{cr}[e_1,e_3] -\textrm{cr}[e_2,e_3].
\end{eqnarray*}
Thus, $a_1= tb_1+tb_2-tb_3$.

\textit{\textbf{Claim}: The sum $2\, \textrm{cr}[e_1] +\textrm{cr}[e_1,e_2]+ \textrm{cr}[e_1,e_3] -\textrm{cr}[e_2,e_3]$ is unchanged under Reidemeister move IV.  }

\textit{Proof of claim.  }
Let $b_1=2\, \textrm{cr}[e_1] +\textrm{cr}[e_1,e_2]+ \textrm{cr}[e_1,e_3] -\textrm{cr}[e_2,e_3]$. 
Let $d$ represent the strand that is moved past the vertex. 
We distinguish two cases, $(a)$ and $(b)$, according to the number of crossings on each side of the vertex. See Figure~\ref{fig-RademeisterIV}.
We check that the contributions to $b_1$ of the crossing before the move (left) is the same as the contribution to $b_1$ of the crossings after the move (right).
The strand $d$ can be part of $e_1$, $e_2$ or $e_3$.
For  both cases (a) and (b), the equality is shown step by step for $d=e_1$ and $d=e_3$.  In a similar way $b_1$ is unchanged if $d=e_2$.  

\begin{figure}[htpb!]
\begin{center}
\begin{picture}(340, 50)
\put(0,0){\includegraphics{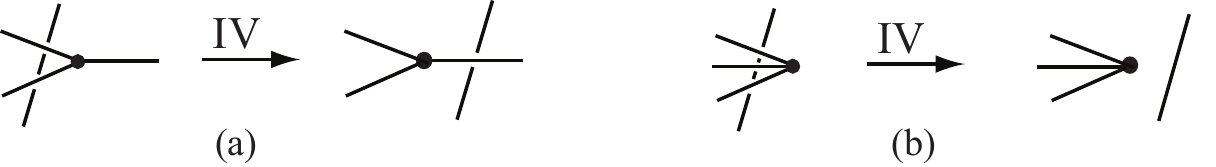}}
\put(-9,40){$e_1$}
\put(-7,15){$e_2$}
\put(20,45){$d$}
\put(40,23){$e_3$}
\put(100,43){$e_1$}
\put(100,12){$e_2$}
\put(145,45){$d$}
\put(145,23){$e_3$}
\put(198,41){$e_1$}
\put(195,30){$e_2$}
\put(198,15){$e_3$}
\put(292,41){$e_1$}
\put(290,28){$e_2$}
\put(292,15){$e_3$}
\put(225,45){$d$}
\put(345,45){$d$}
\end{picture}
\caption{\small Reidemeister IV moves change crossings between different pairs of edges. }
\label{fig-RademeisterIV}
\end{center}
\end{figure}

\noindent Case (a-1) If $d$ is part of $e_1$,  
then $b_{1,\textrm{left}}= 2\textrm{cr}[e_1]+\textrm{cr}[e_1,e_2]=
\textrm{cr}[e_1]$, since the two crossings have opposite sign when seen in the cycle determined by $e_1$ and $e_2$;
and $b_{1,\textrm{right}}= \textrm{cr}[e_1,e_3]= \textrm{cr}[e_1].$\\
Case (a-2) If $d$ is part of $e_3$, then
$b_{1,\textrm{left}}= \textrm{cr}[e_1,e_3]-\textrm{cr}[e_2,e_3]=\textrm{cr}[e_2,e_3]-\textrm{cr}[e_3]=0$ and $b_{1,\textrm{right}}= 0$.\\
\noindent Case (b-1) If $d$ is part of $e_1$,
then $b_{1,\textrm{left}}= 2\textrm{cr}[e_1]+\textrm{cr}[e_1,e_2]+\textrm{cr}[e_1,e_3] = 0$ and $b_{1,\textrm{right}}= 0$. \\
\noindent Case (b-2) If $d$ is part of $e_3$,
then $b_{1,\textrm{left}}=\textrm{cr}[e_1,e_3]-\textrm{cr}[e_2,e_3]=0$, since both these crossings have sign opposite to $\textrm{cr}[e_3]$; and $b_{1,\textrm{right}}= 0$. \\

This complete the proof of the claim and the theorem.  
\end{proof}

The combination of Theorem \ref{trans-push-off-sl} and Theorem \ref{pretzel} gives a complete picture of the possible transverse push-offs of topologically planar Legendrian $\theta-$graphs.  
In this case, the transverse push-off is completely described by the $tb$ of the graph.  
So while this does not add to our ability to distinguish topologically planar Legendrian $\theta-$graphs, it does add to our understanding of the interaction between a Legendrian graph and its transverse push-off.

It is worth noting that Theorem \ref{pretzel} also implies that the transverse push-off will either have one or three components.  
The possible transverse push-offs of a topologically planar Legendrian $\theta$-graph are more restricted than it may first appear.  
Not all pretzel links will occur in this way.  
In Theorem \ref{pretzel}, we found the pretzel coefficients as linear combinations with coefficients +1 or $-1$ of the $tb$'s.
We note that the three pretzel coefficients  have the same parity, restricting the number of components the transverse push-off can have.
If exactly one of or all three of $tb_1$, $tb_2$ and $tb_3$ are odd, then all pretzel coefficients are odd and the pretzel curve is a knot.
If none or exactly two of $tb_1$, $tb_2$ and $tb_3$  are odd, then  all pretzel coefficients are even and the pretzel curve is a three component link.
The pairwise linking between its components is equal to the number of full twists between the corresponding  pair of  strands in the pretzel presentation, i.e. $a_1/2$, $a_2/2$ and $a_3/2$.

\subsection{The transverse push-off of  $n\theta-$graphs.}

We give examples showing the boundary of the Legendrian ribbon associated to an  $n\theta-$graph, $n>3$, is not necessarily a pretzel-type link. 
Independent of $n$, each component of an $n-$pretzel type link is linked with at most two other components. 
The tranverse push-offs of the graphs in Figure~\ref{fig-ntheta} have at least one component linking more than two other components of the link.
The characterization as a pretzel curve of the topological type of the push-off is therefore exclusive to the case $n=3$, that of $\theta-$graphs.
\begin{figure}[htpb!]
\begin{center}
\begin{picture}(370, 230)
\put(0,0){\includegraphics{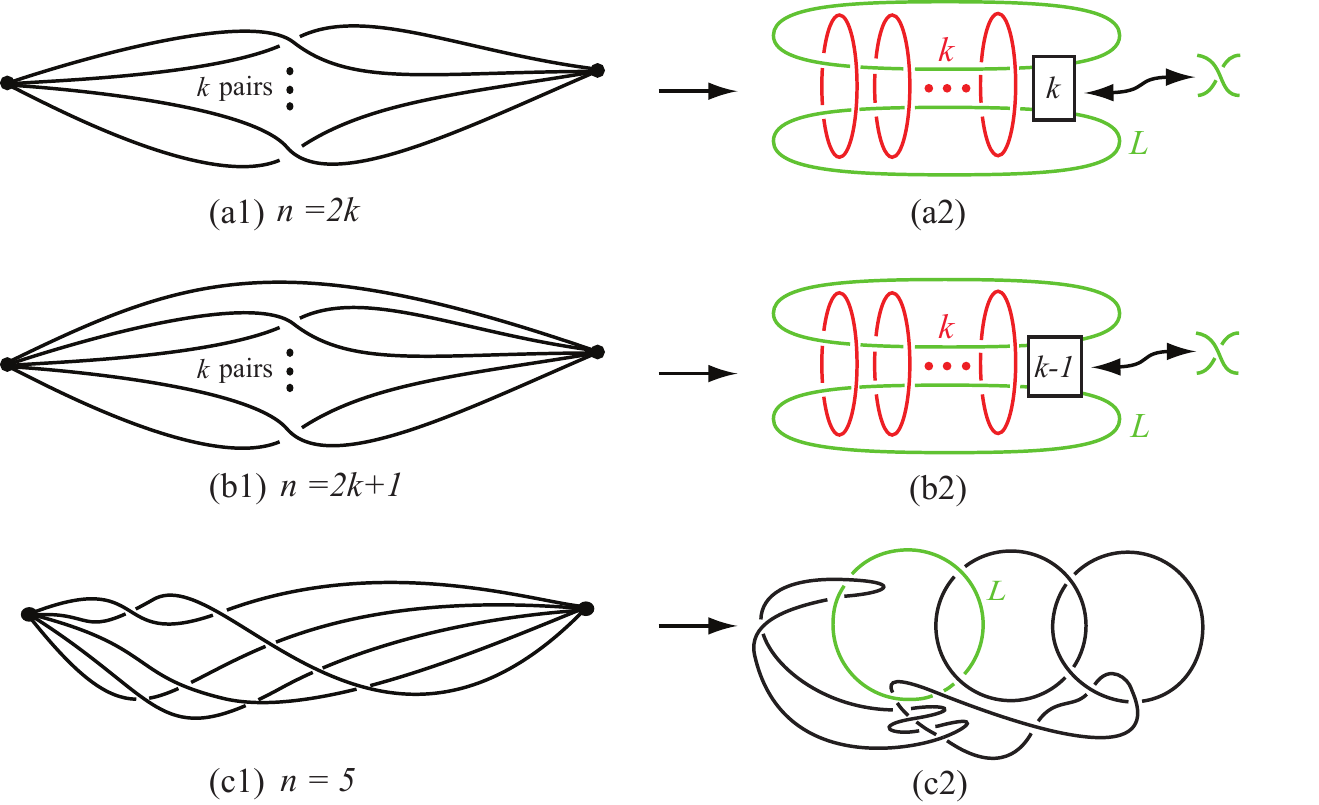}}
\put(267,190){\tr{$L_k$}}
\put(267,110){\tr{$L_k$}}
\end{picture}
\caption{\small The $n\theta-$graphs in (a1), (b1) and (c1) have transverse push-offs (a2), (b2) and (c2) which do not have the topological type of a pretzel-type curve.}
\label{fig-ntheta}
\end{center}
\end{figure}

For $n=2k, k\ge 2$, let $L_{2k}$ be the Legendrian $2k\theta-$graph whose front projection is the one in Figure~\ref{fig-ntheta}(a1). 
Then the transverse push-off has the topological type of the link $L\cup L_k$ in Figure~\ref{fig-ntheta}(a2).
If $k$ is odd, $L$ has one component and it links all $k\ge 3$ components of $L_k$.
If $k$ is even, then $L$ has two components where each of the two components links all $k\ge 2$ components of $L_k$ and the other component of $L$.

For $n=2k+1, k\ge 3$, let $L_{2k+1}$ be the Legendrian $(2k+1)\theta-$graph whose front projection is the one in Figure~\ref{fig-ntheta}(b1). 
Then the transverse push-off has the topological type of the link $L\cup L_k$ in Figure~\ref{fig-ntheta}(b2).
If $k$ is even, then $L$ has one component and it links all $k\ge 3$ components of $L_k$.
If $k$ is odd, then $L$ has two components where each of the two components links all $k\ge 3$ components of $L_k$ and the other component of $L$.

For $n=5$, the link in Figure~\ref{fig-ntheta}(b2) is a pretzel link and we give a different example in this case, the one in Figure~\ref{fig-ntheta}(c1,c2).
The highlighted component of the transverse push-off links three other components.

\newpage
\bibliographystyle{amsplain}

\end{document}